\renewcommand{\l@section}{\@tocline{1}{0pt}{2pc}{}{}}
\renewcommand{\l@subsection}{\@tocline{2}{0pt}{3.5pc}{}{}}
\newtheorem{theorem}{Theorem}
\theoremstyle{plain}
\newtheorem{claim}{Claim}
\newtheorem{definition}{Definition}
\newtheorem{lemma}{Lemma}
\newtheorem{proposition}{Proposition}
\newtheorem{remark}{Remark}
\numberwithin{equation}{section}
\numberwithin{theorem}{section}
\numberwithin{proposition}{section}
\numberwithin{lemma}{section}
\numberwithin{cor}{section}
\numberwithin{definition}{section}
\numberwithin{remark}{section}
\numberwithin{claim}{section}
\newcommand{\ds}{ \displaystyle }
\newcommand{\p}{\partial}
\newcommand{\R}{\mathbb R}
\newcommand{\la}{\lambda}
\newcommand{\e}{\epsilon}
\newcommand{\E}{{\mathcal E}}
\let\oldbibliography\thebibliography
\renewcommand{\thebibliography}[1]{\oldbibliography{#1}
\setlength{\itemsep}{0pt}}
\begin{document}

\title[Corotational Harmonic Map Heat Flow]{Global solutions for the critical, higher-degree corotational harmonic map heat flow to $\mathbb{S}^2$}
\author{Stephen Gustafson}
\author{Dimitrios Roxanas}
\keywords{Harmonic Map Heat Flow, Concentration Compactness, Global Existence, Stability.}
\subjclass[2010]{35K05, 35K58}
\maketitle

\setcounter{page}{1}

%\unmarkedfntext{\textbf{Keywords:} Harmonic Map Heat Flow, Concentration Compactness, Global Existence, Stability. \textbf{2010 AMS Mathematics Classification:} 35K05, 35B40, 35B65}

\begin{abstract}
We study m-corotational solutions to the Harmonic Map Heat Flow from 
$\mathbb{R}^2$  to  $\mathbb{S}^2$.
We first  consider maps of zero topological degree, with initial energy below the threshold
given by twice the energy of the harmonic map solutions. For $m \geq 2$, we establish the
smooth global existence and decay of such solutions via the {\it concentration-compactness} 
approach of Kenig-Merle, recovering classical results of Struwe by this alternate
method. The proof relies on a profile decomposition, and the energy dissipation relation. 
We then consider maps of degree $m$ and initial energy {\it above} the harmonic map threshold energy, but below three times this energy. For $m \geq 4$, we establish the
smooth global existence of such solutions, and their decay to a harmonic map (stability),
extending results of Gustafson-Nakanishi-Tsai to higher energies. 
The proof rests on a stability-type argument used to rule out finite-time 
bubbling.
\end{abstract}

\tableofcontents

%------------------------------
\section{Introduction and Results}

The harmonic map heat flow into $\mathbb{S}^2$ is given by the equation
\begin{equation}
\textbf{u}_t = \Delta \textbf{u} + | \nabla \textbf{u} |^2 \textbf{u},  \hspace{1em} 
\textbf{u}(0,x) = \textbf{u}_0(x) \label{hmheat flow}
\end{equation}
where for $t \geq 0$,
\[
  \textbf{u}(t,\cdot) : \mathbb{R}^2 \rightarrow \mathbb{S}^2,
\]
\begin{equation*}\mathbb{S}^2 := \{ \textbf{u} = (u_1,u_2,u_3) : |\textbf{u}| = 1 \} \subset \mathbb{R}^3,\end{equation*}
is the unit 2-sphere, $\Delta$ denotes the Laplace operator in $\mathbb{R}^2$, 
and $\ds |\nabla \textbf{u}|^2 = \sum^2_{j=1} \sum^3_{i=1} (\frac{\partial u_i}{\partial x_j})^2$.
Equation ~\eqref{hmheat flow} is the $L^2$-gradient flow of the energy functional \begin{equation*} 
  \E(\textbf{u}) = \frac{1}{2} \int_{\mathbb{R}^2} |\nabla \textbf{u}|^2 dx
\end{equation*}
for such maps.
Taking formally the scalar product of the PDE with $\textbf{u}_t$ 
and integrating over $[0,t) \times \mathbb{R}^2$, we obtain
\begin{equation*} 
  \E(\textbf{u}(t,\cdot)) +  \int^t_0 \int_{ \mathbb{R}^2 } |\textbf{u}_t|^2 = \E(\textbf{u}_0) 
\end{equation*}
which implies that the energy is non-increasing. 
A more geometric way to write \eqref{hmheat flow} is
\begin{equation*}\textbf{u}_t = \sum^2_{j=1} D_j \partial_j \textbf{u} = P^{\textbf{u}} \Delta \textbf{u}, \end{equation*}
where $P^{\textbf{u}}$ denotes the orthogonal projection from $\mathbb{R}^3$ onto the tangent plane
\begin{equation*}
T_{\textbf{u}} \mathbb{S}^2 := \{ \boldsymbol \xi \in \mathbb{R}^3 : \boldsymbol \xi \cdot \textbf{u} = 0 \}
\end{equation*}
to $\mathbb{S}^2$ at $\textbf{u}$,
$\partial_j = \frac{\partial}{\partial x_j}$ is the usual partial derivative and $D_j$ the covariant derivative acting on vector fields $\xi(x) \in T_{\textbf{u}(x)} \mathbb{S}^2 :$
\begin{equation*}
D_j \xi := P^{\textbf{u}} \partial_j \xi = \partial_j \xi -(\partial_j \xi \cdot \textbf{u})\textbf{u} = \partial_j \xi + (\partial_j \textbf{u} \cdot \xi) \textbf{u}.
\end{equation*}
%In words, consider the connection on $\mathbb{S}^2$ induced (just by inclusion) from $\mathbb{R}^3$ (just the standard directional derivative)
%and to make an affine connection of it, project it down to the tangent plane of $\mathbb{S}^2,$ 

The harmonic map heat flow between Riemannian manifolds was introduced by 
Eells-Sampson \cite{ES} to study harmonic maps, which are its static solutions.  
Equation \eqref{hmheat flow}, where the target manifold is $\mathbb{S}^2$,
is, in addition, physically relevant as the purely diffusive case of the
{\it Landau-Lifshitz equations} of ferromagnetism \cite{KIK}.  
The setting of a two-dimensional domain is therefore of physical importance, 
but is also analytically interesting as the {\it energy-critical} one: the scaling 
$u(x) \mapsto u(\la x)$ leaves both the equation and the energy invariant
\begin{equation*} 
  \E(\textbf{u}(\cdot)) = \E(\textbf{u}(\frac{\cdot}{\lambda})),
\end{equation*}
and so this is the borderline case for 
smooth global existence versus possible singularity formation.
  
The question of singularity formation and characterization of possible blow-up has attracted a lot of attention. On a compact manifold domain, Struwe \cite{St} 
constructed a global weak solution whose singularities occur through energy concentration 
at a finite number of space-time points, at each of which a non-trivial harmonic map
bubbles off: for $t_n \nearrow T$,
\begin{equation*} 
  \textbf{u}(t_n, a_n + \lambda(t_n) x) \rightarrow \textbf{Q}(x), 
  \hspace{0.5em} \lambda(t_n) \rightarrow 0, \; a_n \to a,
  \quad \textbf{Q} \mbox{ harmonic}
\end{equation*}
locally in space.
Later work \cite{Q,DT,QT,Ta,Tb} (see also the book \cite{LW}) showed that
at a singularity, all the energy is accounted for by the bubbles and
the the weak limit ({\it body map}), and therefore that the solution converges strongly
to the body map, after all the bubbles are removed.

 Working in the subclass of the $m$-corotational solutions with $m = 1$, on a disk, 
 \cite{CDY} showed that, indeed, finite time blow-up does occur in some situations, 
using the sub-solution method. 
Formal analysis \cite{BHK}, and later rigorous constructions \cite{RSa,RSb},  
show that for 1-corotational maps described by the azimuthal angle $u(r,t)$,  
approaching a blow-up time $t \nearrow T$, 
\begin{equation*}
\begin{split} &u(t,r) - Q(\frac{r}{\lambda(t)}) \rightarrow u^* \;\;
\text{ in } \hspace{0.5em}  \dot{H}^1, \\ &\lambda(t) = c(u_0) (1+ o_{t\rightarrow T}(1)) \frac{(T-t)^L}{|\log(T-t)|^{\frac{2L}{2L-1}}}, c(u_0) > 0,\end{split} 
\end{equation*}
where $Q$ corresponds to the unique (up to scaling) harmonic map in this class,
and $L \in \mathbb{Z}^+$, with $L=1$ providing the generic blow-up rate. 
See also \cite{MDaW} for a related recent result,
and \cite{Bonn1, Bonn2,Ibrahim1, Ibrahim2} concerning the breakdown of solutions in higher (supercritical) dimensions.

On the other hand, Grotowski-Shatah \cite{GS}, using maximum principle methods, 
showed that on the unit disc in $\mathbb{R}^2$, 
$m$-corotational solutions will not blow-up in finite-time for degrees $ m \geq 2$, 
given certain pointwise bounds on the initial data.
One of our goals is to extend this result to the domain $\mathbb{R}^2$,
and, more importantly, to give a {\it maximum principle-free proof}, which one 
can therefore hope might extend to systems such as the Landau-Lifshitz equations. 

In this work we specialize to {\it $m$-co-rotational} maps: in polar coordinates,
\begin{equation*} 
  \textbf{u}(t, (r,\theta)) = (\cos (m \theta) \sin(u (t, r)), \sin (m \theta) \sin(u (t, r)), \cos(u (t, r))).
\end{equation*}
for which~\eqref{hmheat flow} reduces to the problem
\begin{equation} 
  u_t = u_{rr} + \frac{1}{r} u_r - m^2 \frac{\sin2u}{2r^2},
  \qquad u(0,r) = u_0(r) 
\label{hmhf} 
\end{equation}
for the angle $u(t,r)$. Without loss of generality we assume $m > 0.$
%Moreover, if (\ref{hmhf}) holds, then the other components of $\vec{u}$ are also easily seen to satisfy the heat-flow equation.
Defining
\[
\begin{split}
&\Delta_r u= u_{rr} + \frac{1}{r}u_r, \;\; \text{ the radial Laplacian in \hspace{0.00001em}} \mathbb{R}^2,  \\ 
&\Delta_m u = (\Delta_r - \frac{m^2}{r^2})u, \hspace{0.5em}  
  \end{split}
\]
we may write~\eqref{hmhf} as
\begin{equation*}
\begin{split}
  u_t &= (\Delta_r - \frac{m^2}{r^2}) u  +\frac{m^2}{r^2} (u- \frac{\sin2u}{2}) \\
  &= \Delta_m u  + F(u), \qquad F(u)= \frac{m^2}{r^2}(u- \frac{\sin2u}{2}).
\end{split}
\end{equation*}
The energy for these maps is given by 
\[
  \E(\textbf{u}) = 2 \pi E(u), \quad
  E(u) := \frac{1}{2} \int_0^{\infty} (u^2_r + m^2 \frac{\sin^2(u)}{r^2}) rdr.
 \]
Note that finite energy requires
\[
  \lim_{r \to 0, \; \infty} u(r) \in \pi \mathbb{Z},
\]
and indeed the assumption of finite energy is sufficient to guarantee the existence of 
these above limits (e.g.,\cite{GKTa}).
For $m$-corotational maps, the classical energy lower-bound 
by the topological degree reads 
\begin{equation}  \label{TLB}
\begin{split}
  E(u) &= \frac{1}{2} \int_0^\infty \left( u_r \pm \frac{m}{r} \sin(u) \right)^2 r dr
  \pm m \int_0^\infty \left( \cos(u) \right)_r d r \\
  & \geq \frac{1}{2} \int_0^\infty \left( u_r \pm \frac{m}{r} \sin(u) \right)^2 r dr
  + m | \cos(u(\infty)) - \cos(u(0)) | \\
  & \geq 2 \; | \text{degree}(\textbf{u}) |
\end{split}
\end{equation}
for the appropriate choice of $\pm$ sign.
The $m$-corotational stationary solutions -- corresponding to the harmonic maps -- are
the functions saturating this inequality, given by
\begin{equation} \label{Q}
  Q(r) = \pi - 2 \arctan(r^m), \quad Q_r + \frac{m}{r} \sin(Q) = 0,
  \;\; Q(0) = \pi, \; Q(\infty) = 0
\end{equation}
and their scalings $Q(\frac{r}{s})$, $s > 0$, as well as the negatives and 
shifts by $\pi \mathbb{Z}$ of these.
Since these harmonic maps each minimize the energy within their topological class, 
they provide natural thresholds for global
smoothness and  decay vs. singularity formation.

In light of the above considerations, we make the following definitions: 
 \begin{equation*}\begin{split}
&E_0 := \{ u: [0,\infty) \to \R \; | \; E(u) < 2 E(Q), \;\; 
\lim_{r \to 0+} u(r)= 0, \; \lim_{r \rightarrow \infty} u (r) = 0 \},\\
&E_1 := \{ u: [0,\infty) \to \R \; | \; E(Q) \leq  E(u) \leq 3 E(Q), 
\lim_{r \to 0+} u(r)=\pi,
\lim_{r \rightarrow \infty} u (r) = 0 \}.
\end{split}
\end{equation*}
and note that 
\[
  u \in E_0 \; \implies \; degree(\textbf{u}) = 0, \quad
  \min_{u \in E_0} E(u) = E(0) = 0, 
\] 
\[
  \quad \; u \in E_1 \; \implies \; degree(\textbf{u}) = m, \quad
  \min_{u \in E_1} E(u) = E(Q) =  2m.
\] 

Our first result concerns solutions in the ``below-threshold" class $E_0$:
\begin{theorem} \label{hmhf below threshold theorem} 
Assuming $u_0 \in E_0,$ and $m \geq 2$, \eqref{hmhf} has a unique solution 
$u(t,r)$, which is global in time, smooth, and decays:
$E(u(t,\cdot)) \to 0$ and $\ds \sup_r |u(t,r)| \to 0,$ as $t \to \infty$.
\end{theorem}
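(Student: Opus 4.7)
\textbf{Proof plan for Theorem \ref{hmhf below threshold theorem}.}
The strategy is the Kenig--Merle concentration-compactness/rigidity roadmap, adapted to this parabolic equivariant setting. First I would establish local well-posedness in a suitable energy class for degree-zero data: solving $u_t = \Delta_m u + F(u)$ by a contraction argument, exploiting the dispersive estimates for the operator $e^{t \Delta_m}$ on radial functions (with $m \geq 2$ giving enough ``repulsion'' near $r = 0$ to treat the nonlinearity $F(u) = O(u^3/r^2)$ perturbatively). Along the way one records two structural facts: (i) the coercivity $E(u) \gtrsim \|u\|_{\dot{H}^1_m}^2$, valid because the deficit $E(u) - \frac12\int (u_r^2 + m^2 u^2/r^2) r\, dr$ is controlled by higher powers for $u$ in the sublevel set $\{E < 2 E(Q)\}$ and $u(0) = u(\infty) = 0$; and (ii) the energy identity $E(u(t)) + \int_0^t \int |u_t|^2 \, r\,dr\,ds = E(u_0)$, which provides the dissipation that drives decay.

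Suppose the theorem fails. Define $E^\ast := \sup\{ E < 2E(Q) : \text{every } u_0 \in E_0 \text{ with } E(u_0) \leq E \text{ yields a global, smoothly decaying solution}\}$. By the small-data theory $E^\ast > 0$, and by assumption $E^\ast < 2E(Q)$. The core of the argument is to extract a \emph{critical element}: a solution $u_\ast$ with $E(u_\ast) = E^\ast$ that either blows up or fails to decay, and whose trajectory $\{u_\ast(t, \cdot)\}$ is precompact in $\dot H^1_m$ modulo the scaling $u \mapsto u(\cdot/\lambda)$. The existence of such an element is obtained from a linear profile decomposition for bounded sequences in $\dot H^1_m$ (a radial, equivariant version of the G\'erard/Bahouri--G\'erard decomposition), combined with a nonlinear perturbation lemma that promotes linear profiles to genuine nonlinear solutions and uses the Pythagorean splitting of energy to isolate a single non-dispersive profile.

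The rigidity step then removes this critical element. Because the dissipation $\int_0^\infty \int |u_t|^2\, r\,dr\,ds \leq E(u_0) < \infty$ is finite, along any time sequence $t_n \to \infty$ (or $t_n$ approaching the blow-up time, suitably rescaled) we may pass to a subsequence along which $\|\partial_t u_\ast(t_n,\cdot)\|_{L^2_r} \to 0$. By the precompactness of the trajectory there is a limit $u_\infty$ (in the rescaled frame) satisfying the stationary equation $\Delta_m u_\infty + F(u_\infty) = 0$, i.e. $u_\infty$ is an $m$-corotational harmonic map. Now the topological/energy obstruction kicks in: within the class $E_0$ of zero-degree, finite-energy equivariant maps one has $E(u_\infty) \geq 2E(Q)$ unless $u_\infty \equiv 0$, since by \eqref{TLB} any nonconstant harmonic map has energy at least $2m = E(Q)$ in its topological class, and a zero-degree harmonic map must consist of at least \emph{two} bubbles of opposite orientation (total energy $\geq 2 E(Q)$). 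Thus $u_\infty \equiv 0$, which, together with the compactness of the orbit, forces $E(u_\ast(t_n)) \to 0$, contradicting $E(u_\ast) = E^\ast > 0$.

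The main obstacle is the profile decomposition and the attendant nonlinear perturbation theory in the equivariant parabolic setting: one has to work in a scale-invariant functional framework (e.g.\ a Strichartz-type norm for $e^{t\Delta_m}$ on radial functions), verify asymptotic orthogonality of profiles at the level of the nonlinearity $F(u) = (m^2/r^2)(u - \tfrac12 \sin 2u)$, and control the interaction between distinct scales. The restriction $m \geq 2$ enters here in an essential way, as it provides the decay of the kernel of $\Delta_m$ needed for the small-data scattering theory and for bounding $r^{-2}$-weighted remainder terms uniformly in the profile scales; for $m = 1$ the linear operator is no longer coercive enough and additional resonances must be handled separately.
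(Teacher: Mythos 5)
Your proposal follows the same high-level Kenig--Merle roadmap as the paper (local theory in a scale-invariant norm $L^4_t\,rL^4_r$, a stability-under-perturbations lemma, a radial profile decomposition in an $\dot H^1$-type space, extraction of a minimal-energy critical element), and the first several ingredients match the paper's closely, including the observation that coercivity of the energy over the $X^2$-norm is available on the sublevel set $\{E < 2E(Q)\}$ and the role of $m\geq 2$ in the endpoint linear estimates. Where you diverge is the rigidity step. You propose the ``dispersive-style'' rigidity: establish precompactness of the critical element's orbit modulo scaling, use finiteness of $\int \|u_t\|_{L^2}^2\,dt$ to pick times $t_n$ with $\|u_t(t_n)\|_{L^2}\to 0$, pass to a stationary limit $u_\infty$, and then invoke the classification of corotational harmonic maps in $E_0$ to force $u_\infty\equiv 0$. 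This can be made to work, but it requires a further, nontrivial ingredient your writeup assumes without proof: the precompactness of $\{u_\ast(t,\cdot)\}$ modulo scaling, which in the Kenig--Merle framework is obtained by a second application of the profile decomposition and stability theorem at interior times, and there is also a rescaling subtlety in passing $\partial_t u_\ast(t_n)\to 0$ to the stationary equation when the concentration scale degenerates. The paper's rigidity argument is considerably shorter and does not need orbit compactness at all: by the energy dissipation identity, $E(u_c(t))$ is \emph{strictly} decreasing unless $u_c$ is stationary; $E_0$ contains no nonzero stationary solutions (the boundary conditions $u(0)=u(\infty)=0$ exclude all scalings and shifts of $Q$, which are monotone); hence $E(u_c(t))<E_c$ for some $t>0$, and then the definition of $E_c$ as an infimum forces the solution restricted to $[t,\infty)$ to be global and decaying, so $u_c$ is too---a contradiction. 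Your route is what one is forced to use for wave or Schr\"odinger maps, where dissipation is absent; the paper's exploits the parabolic structure to reduce rigidity to a one-line monotonicity observation.
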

The main purpose here is to give a proof which follows Kenig-Merle's 
{\it concentration-compactness} strategy \cite{KMa}, originally developed for 
(and widely applied to) {\it dispersive} problems,
but relevant also to certain {\it diffusive} ones \cite{GKP1,GKP2,KK,GR}. 
The method is well-suited to the non-compact domain, and, more pertinently, provides an alternative approach to the classical theory of Struwe and successors. 

We first establish a local well-posedness theory for solutions of~\eqref{hmhf} in $E_0$
which parallels that for (say) the energy-critical nonlinear Schr\"o-dinger equation, and differs from that appearing in the classical parabolic literature.
Then the key tools for the concentration-compactness strategy are a stability-under-small-perturbations variant of the local theory, and a profile decomposition for an $\dot{H}^1$-like space, adapted to the heat flow.
A profile decomposition directly applicable to our setting was not readily available, stemming from the absence of some Sobolev embeddings in dimension two. 
So we take an indirect approach, first establishing estimates on the linear evolution 
in higher dimensions, which then connect back to our problem through a 
change of variable.

For applications of the concentration-compactness approach to other (below threshold) geometric problems we refer, for instance, to \cite{CKLSa} in the context of Wave Maps, and to \cite{BIKTa,BIKTb,GK} for Schr\"odinger Maps. A more comprehensive review of the literature can be found in \cite{thesis}.

To put our results in the ``above-threshold" class $E_1$ into context, we first recall 
results from the series of papers \cite{GKTa, GKTb,GGT,GNT} which apply
to the $m$-corotational heat-flow~\eqref{hmhf}, but more generally to
solutions of the {\it Landau-Lifshitz} family of equations
\begin{equation} 
\textbf{u}_t = a \left(\Delta \textbf{u} +  |\nabla \textbf{u}|^2\textbf{u} \right) 
+ b \; \textbf{u} \times \Delta \textbf{u}, \quad 
\textbf{u}(0,x) = \textbf{u}_0(x), \quad
a\geq 0, \; b \in \mathbb{R}
\label{LL} 
\end{equation}
of degree $m$, with {\it equivariant} symmetry.
For higher degrees, the $m$-equivariant harmonic maps are shown to be asymptotically
stable in the strong sense that if the initial data has near-minimal (harmonic) 
energy given the degree,
the solution is globally smooth and asymptotically converges to a nearby harmonic map: 
\begin{theorem}\label{regularity for LL}(\cite{GNT}).
Assume $\textbf{u}_0$ is of degree $m \geq 3$ with equivariant symmetry,  and 
\begin{equation*} 
  \mathcal{E}(\textbf{u}_0) - 4\pi m \ll 1.
\end{equation*} 
Then the solution of~\eqref{LL} is globally regular (continuous into the energy space)
and there exists a harmonic map $\textbf{Q}$ close to $\textbf{u}_0$
(in the energy norm) such that
\[
  \| \textbf{u}(t,\cdot) - \textbf{Q} \|_{L^{\infty}} + 
  a \mathcal{E}(\textbf{u}(t,\cdot) - \textbf{Q}) \rightarrow 0, 
  \hspace{0.3em} \text{as}  \hspace{0.3em} t \rightarrow \infty.
\]
\end{theorem}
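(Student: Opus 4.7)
By equivariant symmetry, \eqref{LL} reduces to a scalar equation for an angle function $u(t,r)$ analogous to \eqref{hmhf} but with complex coefficients reflecting both the dissipative ($a$) and precessional ($b$) parts. The hypothesis $\mathcal{E}(\textbf{u}_0) - 4\pi m \ll 1$, combined with the topological lower bound \eqref{TLB} and a compactness argument, implies that $u_0$ is close in the energy norm to some rescaling $Q_{\lambda_0}(r) = Q(r/\lambda_0)$ of the harmonic map \eqref{Q}. The plan is to propagate this closeness: write $u(t,r) = Q_{\lambda(t)}(r) + \eta(t,r)$, where the modulation parameter $\lambda(t) > 0$ is chosen so that $\eta(t, \cdot)$ is orthogonal to the scaling zero mode $\Lambda Q := r \partial_r Q$. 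This orthogonality, together with the coercivity of the linearized energy around $Q$ on the symplectic complement of the zero direction, provides control of $\eta$ in the energy norm by $\mathcal{E}(\textbf{u}_0) - 4\pi m$.

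The core of the argument is a dispersive/decay estimate for the linearized evolution. Introduce a generalized Hasimoto-type transform: let $q(t,r)$ denote (schematically) a derivative of the gauged perturbation, where the gauge is chosen using $Q_{\lambda}$ itself. Then $q$ satisfies a linear equation driven by the complex operator $(a + ib)\Delta$ with a potential depending on $Q_{\lambda(t)}$. Crucially, conjugation by $Q_{\lambda}$ converts the principal part into a radial Laplacian in a higher \emph{effective} dimension depending on $m$, which for $m \geq 3$ is large enough to eliminate zero-energy resonances and to deliver favorable weighted $L^p$ or Strichartz-type decay estimates for the underlying linear semigroup $e^{t(a+ib)H}$. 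With these in hand one estimates the nonlinearity, schematically of the form $Q_\lambda q^2 + q^3$, and closes a bootstrap argument on an appropriate norm of $q$.

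The modulation equation for $\dot{\lambda}$ is extracted from the orthogonality condition by differentiating in time; one finds $|\dot{\lambda}/\lambda|$ pointwise controlled by $\|q(t, \cdot)\|$, and hence integrable in $t$ by the bootstrap. This forces $\lambda(t) \to \lambda_\infty \in (0,\infty)$, so that the limiting harmonic map is $\textbf{Q}_{\lambda_\infty}$. The asymptotic convergences $\|\textbf{u}(t,\cdot) - \textbf{Q}_{\lambda_\infty}\|_{L^\infty} \to 0$ and (for $a > 0$) $a\,\mathcal{E}(\textbf{u}(t,\cdot) - \textbf{Q}_{\lambda_\infty}) \to 0$ then follow by combining the decay of $q$ with the orthogonality and coercivity. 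The principal obstacle is the linear theory for the gauged semigroup: the potential tails are long-range and the spectral behavior at the threshold is delicate, so one must work in the effective higher dimension afforded by the Hasimoto transform. The assumption $m \geq 3$ is precisely what makes that effective dimension large enough for the spectral picture to be favorable; the strategy, in its current form, breaks down at $m = 1, 2$, which is exactly where finite-time blow-up is known to be possible.
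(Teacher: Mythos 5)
This statement is not proved in the paper at all: it is \emph{cited} from \cite{GNT}, as the ``(\cite{GNT})'' tag in the theorem header indicates, so there is no in-paper proof to compare against. That said, your sketch is a broadly faithful high-level account of the \cite{GNT} strategy, and it lines up with the linearized machinery that the current paper does adapt in Section 3: your orthogonality condition against $\Lambda Q = rQ_r$ is equivalent (up to a constant $-m$, since $rQ_r = -m\sin Q = -m h$) to the paper's condition $(\xi, h^s)_{L^2_{rdr}} = 0$; the ``effective higher dimension'' you invoke is exactly what the paper exploits via the factorization $H^s = (L^s)^* L^s$ with $L^s(L^s)^* \gtrsim -\Delta_r + (m-1)^2/r^2$; and the modulation equation for $\dot\lambda$ (here $\dot s$) is extracted precisely by differentiating the orthogonality relation.

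Two substantive caveats. First, your sketch treats $m\geq 3$ uniformly, but the paper explicitly records (in the remarks after the theorem) that $m=3$ is significantly harder than $m\geq 4$ and requires an additional \emph{normal form}-type argument in \cite{GNT} to obtain the asymptotic behaviour; your bootstrap-and-decay outline, as written, is the $m\geq 4$ argument and would not close at $m=3$ without that extra ingredient. Second, your closing claim that the strategy ``breaks down at $m=1,2$, which is exactly where finite-time blow-up is known to be possible'' is half wrong: for $m=2$ corotational heat flow, near-minimal-energy solutions are still \emph{global}, and the failure mode is infinite-time concentration (or other non-convergent behaviour), not finite-time blow-up; finite-time blow-up for near-minimal energies is a $m=1$ phenomenon. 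Finally, a smaller point: the theorem is stated for \emph{equivariant} (not merely corotational) symmetry, where the harmonic map family has an extra rotational phase parameter, so the modulation decomposition in \cite{GNT} actually carries two parameters $(\lambda,\alpha)$, not just the scale $\lambda$ you introduce.
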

\noindent
{\it Remarks:}
\begin{itemize}
\item
in the dissipative case ($a > 0$) these solutions are
converging to a harmonic map in the energy norm, while this is impossible for
the conservative case $a=0$, known as the {\it Schr\"odinger flow};
\item
the case $m=3$ is significantly more complex that $m \geq 4$, in particular
requiring a normal form-type argument (\cite{GNT}) to establish the
asymptotic behaviour -- for this reason we consider only $m \geq 4$ here;     
\item
for the $m = 2$ corotational heat-flow, the above conclusion is false: solutions 
are still global, but may exhibit blow-up in {\it infinite time},
or other complex behaviours (\cite{GNT});
\item
for $m=1$, near-minimal energy solutions may exhibit {\it finite-time} blow-up 
(\cite{RSa,RSb}).
\end{itemize}

Our main result is to extend this theorem, for the corotational heat-flow, beyond the perturbative regime to the higher energy maps in $E_1$: 
\begin{theorem} \label{above threshold theorem}
Assuming $u_0 \in E_1$ and $m \geq 4$, \eqref{hmhf} has a unique solution,
which is global in time, smooth, and converges to a harmonic map: 
for some $s > 0$, $\E(u(t,\cdot) - Q(r/s)) \to 0$ and
$\ds \sup_r |u(t,r) - Q(r/s)| \to 0,$ as $t \rightarrow \infty$. 
\end{theorem}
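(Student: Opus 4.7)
The plan is to argue by contradiction: supposing finite-time blow-up at some $T^{\ast} < \infty$ and combining the Struwe-type bubbling picture with the below-threshold theory (Theorem \ref{hmhf below threshold theorem}) to set up a stability argument around the harmonic map. First, I would adapt the local well-posedness developed for the proof of Theorem \ref{hmhf below threshold theorem} to the class $E_1$; a standard continuation criterion then yields the alternative between global smoothness and finite-time blow-up with concentration. In the blow-up scenario, an equivariant version of Struwe's analysis produces sequences $t_n \nearrow T^{\ast}$ and scales $\lambda_n \to 0$ along which a harmonic map bubble is extracted and the solution decomposes, weakly in $\dot H^1$, as
\begin{equation*}
u(t_n, r) \; = \; u^{\ast}(r) \; + \; Q^{\#}\!\left( \frac{r}{\lambda_n} \right) \; + \; \mathrm{error}_n(r),
\qquad \|\mathrm{error}_n\|_{\dot H^1} \to 0,
\end{equation*}
for some body map $u^{\ast}$ and equivariant harmonic map $Q^{\#}$.

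Next, I would exploit the energy and degree constraints. Since each bubble carries energy at least $E(Q) = 2m$ and $E(u_0) < 3 E(Q)$, at most one bubble develops; degree conservation then forces it to absorb the full topological degree $m$ of $u_0$, so $Q^{\#}$ is a rescaling of $Q$ and the body map $u^{\ast}$ has degree zero with $E(u^{\ast}) \leq E(u_0) - E(Q) < 2 E(Q)$. In particular $u^{\ast} \in E_0$, so by Theorem \ref{hmhf below threshold theorem} its forward HMHF evolution $U^{\ast}(t, r)$ is globally smooth and decays to $0$ in $L^{\infty}$ and in energy.

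The heart of the argument is then a modulation/stability analysis built around the ansatz
\begin{equation*}
u(t, r) \; = \; U^{\ast}(t, r) \; + \; Q\!\left( \frac{r}{\lambda(t)} \right) \; + \; \varepsilon(t, r),
\end{equation*}
with $\lambda(t) > 0$ determined by an orthogonality condition and $\|\varepsilon(t_n)\|_{\dot H^1} \to 0$. I would derive a modulation ODE for $\lambda$ coupled to an energy-type estimate for $\varepsilon$, whose engine is the coercivity of the linearized operator around $Q$ -- strictly positive for $m \geq 4$, precisely the feature that allowed Gustafson--Nakanishi--Tsai to avoid a normal form in Theorem \ref{regularity for LL}. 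The equivariant boundary value $U^{\ast}(t, 0) = 0$ and the smoothness of $U^{\ast}$ make the body-map correction small on the concentration scale, so the cross-terms between $U^{\ast}$ and the bubble $Q(\cdot/\lambda)$ can be absorbed into the coercive estimate; this should force $\lambda(t)$ to stay bounded away from zero on $[0, T^{\ast})$, contradicting $\lambda_n \to 0$ and thereby ruling out finite-time bubbling.

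Once global smoothness is in hand, the asymptotic convergence to a rescaled harmonic map follows from the energy dissipation $\int_0^{\infty}\!\int |u_t|^2 \, r\, dr\, dt < \infty$, the topological trapping in $E_1$, and the same modulation framework run on $[0, \infty)$ to pin down a limiting scale $s > 0$. The \textbf{main obstacle} is the stability analysis itself: unlike in Theorem \ref{regularity for LL}, the reference state is not a single harmonic map but $U^{\ast}(t) + Q(\cdot/\lambda)$, with the body-map piece of $O(1)$ size in energy. Closing the $\varepsilon$-estimate then requires \emph{both} the below-threshold decay of $U^{\ast}$ (to handle the large-scale part) and the strict coercivity available only for $m \geq 4$ (to handle the small-scale bubble part) -- consistent with the known breakdown of the conclusion for $m=2$ noted after Theorem \ref{regularity for LL}.
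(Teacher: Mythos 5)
Your plan mirrors the paper's strategy quite closely: reduce finite-time blowup, via Qing's bubbling description, to a decomposition $u(t_n) = Q(\cdot/s_n) + w_0 + o(1)$ with $w_0 \in E_0$; evolve $w_0$ globally and decayingly using Theorem~\ref{hmhf below threshold theorem}; modulate as $u = Q^{s(t)} + w(t) + \xi$ under an orthogonality condition; invoke the $m \geq 4$ coercivity of the linearization about $Q$ (the paper realizes this through the factorization $H^s = (L^s)^* L^s$ and the invertibility of $L^s$ from \cite{GNT}); and conclude that $s(t)$ cannot tend to zero.

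There is, however, one genuine gap in your reasoning about why the estimates close. You argue that the cross-terms between the body map $U^*$ and the bubble $Q(\cdot/\lambda)$ are controllable because "the body-map correction is small on the concentration scale" (from $U^*(t,0)=0$ and smoothness). This spatial intuition is insufficient: $w$ carries $O(1)$ energy, and $\| w/r \|_{L^\infty_r}$ is bounded but not small, so pointwise smallness near $r=0$ alone does not give the needed smallness of the error $Eqn(Q^s + w)$, nor allow you to absorb the linear-in-$\xi$ term with coefficient $V$. What actually makes the paper's argument close is \emph{temporal} smallness: $w$ lies globally in $L^2_t X^\infty \cap L^4_t X^4$, so these space-time norms restricted to the shrinking interval $[t_j, T)$ tend to zero as $t_j \to T$. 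The cross-term $Eqn(Q^s + w)$ is then estimated in $L^2_t X^1$ by H\"older, using scale-invariant bounds on $h^s$ together with these vanishing space-time norms of $w$; the same smallness is used again to absorb $\| \frac{1}{r^2} V \sin(2\xi) \|_{L^2_t X^1}$. Without this temporal integrability your bootstrap would not close. Two smaller omissions: finite-time concentration at spatial infinity must first be excluded by a separate dissipation/cut-off argument (the paper's Lemma~\ref{no concentration}); and for $t \to \infty$ the paper does not re-run the modulation machinery but instead observes that infinite-time concentration would force $E(u(t_j)) \to E(Q)$ and then invokes the near-minimal-energy asymptotic stability of \cite{GNT} directly.
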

We would like to emphasize here that solutions in this class {\it are not prohibited from 
forming a singularity by either energetic or topological constraints} -- that these 
solutions remain globally smooth {\it does not follow from any classical theory}.     

The main point is to exclude the possibility of finite-time blowup.
As in \cite{St}, if the solution blows-up in finite time, it does so by bubbling off a non-trivial harmonic map. The corotational symmetry (and finite energy) ensures the only
possible concentration points are $r=0$ and $r=\infty$.
Finite-time energy concentration at spatial infinity is ruled out using the
energy dissipation relation.
The condition $E(u_0) \leq 3 E(Q)$ ensures only one bubble may form, and so 
following \cite{Q}, if $u(t,\cdot) \in E_1$ is a solution blowing up at time 
$T$, there exists a sequence of times $t_n \nearrow T$, scales 
$s_n \searrow 0$, and a function $w_0 \in E_0$ such that 
$\xi(t_n,\cdot) := u(t_n, \cdot) - Q(\frac{\cdot}{s_n}) - w_0 \to 0$ in the energy norm.
This is contradicted by adapting the modulation theory and linearized 
(about $Q$) evolution estimates of \cite{GNT} to estimate $\xi(t_n,\cdot)$,
and show $s_n \not\to 0$. We exploit the fact that in certain space-time
norms, the nonlinear interaction of $Q(\frac{r}{s_n})$ and 
the (smooth, global) solution emanating from data $w_0$ is
small on small time intervals.

The remaining impediment is possible infinite-time concentration. 
But in that scenario, the energy must approach the minimal energy $E(Q)$,
and so it is excluded by~\cite{GNT}. 

We emphasize that {\it the proof does not in any way rely on the maximum 
principle}. Therefore, the result may be extended beyond the corotational class
to the (larger) {\it equivariant} class, and even to the Landau-Lifshitz 
equations~\eqref{LL} (work in progress).

%-------------------------------------------------------------------
\section{Heat-Flow Below Threshold} \label{HMHF Below}

In this section we prove Theorem~\ref{hmhf below threshold theorem}
on the ``below-threshold" solutions of the corotational heat-flow.

 \subsection{Analytical ingredients}

\subsubsection{Energy properties of maps in $E_0$}

We begin by showing the function class $E_0$ is naturally endowed with the 
energy-space norm
$$
  \| u \|^2_{X^2} = \int_{0}^{\infty} \left(u_r^2 + m^2 \frac{u^2}{r^2} \right) r dr,
$$
in the following sense: given $\delta_1 > 0$, there is $C = C(\delta_1) > 0$ such that
\[
  u \in E_0, \; E(u) \leq 2 E(Q) - \delta_1 \; \implies \;
  \frac{1}{C} \| u \|_{X^2}^2 \leq E(u) \leq C \| u \|_{X^2}^2.
\]
This follows directly from a version of the topological lower bound~\eqref{TLB} 
localized to intervals:
\begin{lemma}
If $u \in E_0,$ with $E(u) \leq 2 E(Q)-\delta_1,$ for some $
\delta_1 > 0,$ there is $\delta_2 = \delta_2(\delta_1) > 0$ such that  
\begin{equation} 
  | u(r) | \leq \pi - \delta_2.
\end{equation}
\end{lemma}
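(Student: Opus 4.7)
The plan is to promote the topological lower bound~\eqref{TLB} to a \emph{pointwise} estimate by splitting the energy integral at an arbitrary radius. Fix $r_0 \in (0,\infty)$ and apply the Bogomolnyi completion of squares on each of $[0,r_0]$ and $[r_0,\infty)$ separately. Using $u(0)=0$ on the first piece yields
\[
  \tfrac12 \int_0^{r_0}\left(u_r^2 + \tfrac{m^2 \sin^2 u}{r^2}\right) r\, dr
  \;\geq\; m\,(1 - \cos u(r_0)),
\]
and the analogous manipulation on $[r_0,\infty)$, using $u(\infty)=0$, gives the same lower bound. Summing and invoking $1 - \cos c = 2\sin^2(c/2)$ along with $E(Q)=2m$ produces the key pointwise inequality
\[
  E(u) \;\geq\; 4m\,\sin^2(u(r_0)/2) \;=\; 2\,E(Q)\,\sin^2(u(r_0)/2).
\]

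Under the hypothesis $E(u) \leq 2\,E(Q) - \delta_1$, this forces
\[
  \sin^2(u(r)/2) \;\leq\; 1 - \tfrac{\delta_1}{2\,E(Q)} \qquad \text{for every } r > 0.
\]
To convert this into a bound on $u(r)$ itself, I first observe that $|u(r)| < \pi$ everywhere: if $u$ attained $\pm\pi$ at some $r^*$, the displayed inequality at $r^*$ would give $E(u) \geq 2\,E(Q)$, contradicting the hypothesis; by continuity with $u(0^+)=0$, the values $\pm\pi$ cannot be jumped over. Thus $u(r)/2 \in (-\pi/2, \pi/2)$, where $|\sin|$ is strictly increasing, so inverting yields
\[
  |u(r)| \;\leq\; 2\arcsin\sqrt{\,1 - \tfrac{\delta_1}{2\,E(Q)}\,} \;=:\; \pi - \delta_2(\delta_1),
\]
with $\delta_2(\delta_1) > 0$, as required.

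The only subtlety is justifying the continuity of $u$ on $(0,\infty)$ used in the intermediate-value step. This is essentially for free: the finiteness of $\int_0^\infty u_r^2\, r\, dr$ places $u$ in $H^1_{\mathrm{loc}}((0,\infty))$ in the variable $r$, which embeds into $C^0_{\mathrm{loc}}((0,\infty))$, and the boundary values at $r=0$ and $r=\infty$ are built into the definition of $E_0$. So the localized Bogomolnyi inequality does all the work, and the lemma reduces to essentially the one-line computation above.
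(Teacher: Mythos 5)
Your proof is correct and takes essentially the same approach as the paper's: both split the energy at an arbitrary radius $r$ and use the boundary conditions $u(0)=u(\infty)=0$ together with a Young's/Bogomolnyi completion of squares to obtain a pointwise lower bound on $E(u)$ in terms of $u(r)$. The only cosmetic difference is that the paper works with the globally strictly monotone functional $G(u)=\int_0^u m|\sin s|\,ds$, which coincides with your $m(1-\cos u)=2m\sin^2(u/2)$ on $[-\pi,\pi]$ but inverts directly on all of $\mathbb{R}$, whereas your $\sin^2(u/2)$ is non-monotone past $\pm\pi$ and so requires the extra continuity/intermediate-value step to first confine $u$ to $(-\pi,\pi)$ — a step you correctly identify and supply.
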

\begin{proof}
As in \cite{SS,CKLSa} we define
\begin{equation*}
G(u) := \int_0^u m | \sin(s) | ds 
\end{equation*} 
and 
\begin{equation*}
E^{r_2}_{r_1}(u) := \frac{1}{2} \int_{r_1}^{r_2} (u^2_r + \frac{m^2 \sin^2(u)}{r^2}) rdr.
\end{equation*}
Then for all $0 \leq r_1 < r_2 < \infty$, by the Fundamental Theorem of Calculus and Young's inequality:
\begin{equation} \begin{split} 
| G(u(r_2)) - G(u(r_1)) | &= \left | \int_{r_1}^{r_2} \frac{\partial}{\partial r} G(u(r)) dr \right | 
=  \left| \int_{r_1}^{r_2} m | \sin u | u_r dr \right |\\ &\leq \frac{1}{2} \int_{r_1}^{r_2}
(\frac{m^2 \sin^2(u)}{r^2}+ u^2_r) rdr \leq E^{r_2}_{r_1}(u)  
\label{Gbnd} 
\end{split}\end{equation}
For $u \in E_0$, $G(u(\infty)) = G(0) = 0$ and $G(u(0)) = G(\pi) = 0$. 
From \eqref{Gbnd} for any $r>0$: 
\begin{equation*} 
| G(u(r))| = | G(u(r)) - G(u(0)) | \leq E^{r}_{0}(u)
\end{equation*} 
and
\begin{equation*} 
| G(u(r))| = | G(u(\infty)) - G(u(r)) | \leq E^{\infty}_{r}(u).
\end{equation*} 
Thus
\begin{equation*}
  | G(u(r)) | \leq \frac{1}{2} E(u) \leq \frac{1}{2} (2E(Q) - \delta_1) = 2m - \frac{\delta_1}{2}.
\end{equation*} 
$G$ is odd, increasing on $[-\pi, \pi]$, and $G(\pi) = 2m$, so
\[
  |u(r)| \leq G^{-1}(2m - \frac{\delta_1}{2}) =: \pi - \delta_2, \quad \delta_2 > 0. 
\]
\end{proof}
We remark that due to the boundary conditions, $E_0$ contains no nontrivial static solutions (corresponding to harmonic maps) since these are all monotone~\eqref{Q}.

\subsubsection{Local well-posedness for maps in $E_0$}

From now on, unless otherwise specified, all norms will be for functions defined
on $(0,\infty)$, with the measure $r dr$ 
We define spaces $r L^p$ via norm
\[
  \| u \|_{r L^p} := \left\| \frac{u}{r} \right\|_{L^p},
\] 
and $X^p$ via norms 
\[ 
  \| u \|^p_{X^p} := \| u_r \|_{L^p}^p + m^p  \| u \|_{r L^p}^p,
  \qquad \| u \|_{X^\infty} = \| u_r \|_{L^\infty} + \| u \|_{r L^\infty}. 
\]

Recall we may write the $m$-corotational heat-flow~\eqref{hmhf} 
for initial data $u_0 \in E_0$ as
\begin{equation} \label{corotationalCP}
 \left\{ \begin{array}{l} 
  u_t = \Delta_m u + F(u) \\
  u(0) = u_0 \in X^2
\end{array} \right. 
\end{equation}
where $\Delta_m = \Delta_r - \frac{m^2}{r^2}$ and
$F(u)= \frac{m^2}{r^2} (u- \frac{\sin2u}{2})$.
We will say that a function 
$u: I \times \mathbb{R} \rightarrow \mathbb{R}, \; I= [0,T)$,
is a {\it solution} to~\eqref{corotationalCP} on $I$ 
if $u \in  C_t X^2_r \cap L^4_t rL^4_r(K)$ for every compact $K \subset I$, 
and for every $t \in I$
\begin{equation}
  u(t) =  e^{t\Delta_m}u_0 + \int_0^t e^{(t-s)\Delta_m} F(u(s)) ds. 
\label{Duhamel}
\end{equation}
We summarize the local theory:
\begin{theorem} (Local well-posedness) \label{hmhfLWP}
\begin{enumerate} 
\item (Local Existence) 
Let $u_0 \in X^2$ There exists an $\epsilon > 0$ such that if 
$I = [0,T)$ and $ \| e^{t\Delta_m} u_0 \|_{L^4_t(I;rL^4)} < \epsilon,$ 
then there exists a unique solution to (\ref{corotationalCP}),
which moreover satisfies $\|u\|_{L^4_t(I;rL^4)} \leq 2 \epsilon$. 
To each initial datum $u_0$ we can associate a maximal time interval
$I = [0, T_{\text{max}}(u_0))$ on which there is a solution
($T_{\text{max}}(u_0)$ may be $ + \infty$).
\item (Blow-up Criterion) 
$T_{\text{max}}(u_0) < + \infty \; \implies
\| u \|_{L^4_t([0,T_{\text{max}}(u_0));  rL^4_r)} = + \infty$. 
\item (Energy dissipation) 
$u_t \in L^2_t([0,T_{max}(u_0)); L^2)$ 
and for each $t < T_{max}(u_0)$,
\[
  E(u(t)) + \int_0^t \int_0^\infty u_t^2(s) r dr \; ds = E(u_0).
\]
\item (Decay) 
If $T_{\text{max}}(u_0) = + \infty $ and $ \| u \|_{L^4_t([0,\infty);  rL^4_r)} < + \infty,$ then \\ 
$ \| u(t) \|_{X^2} \rightarrow 0$ as $ t \rightarrow + \infty.$
\item (Small data) 
If $ \|u_0\|_{X^2}$ is sufficiently small, then
$T_{max}(u_0) = \infty$ and  
$ \| u \|_{L^4_t([0,\infty);  rL^4_r)} \lesssim \| u_0 \|_{X^2}$; in particular
$\| u(t) \|_{X^2} \to 0$.
\item (Continuous Dependence) 
$T_{\text{max}}$ is a lower semi-continuous function of 
$u_0 \in X^2$, and $u_0 \to u(t,\cdot)$ is continuous on $X_2$.
\end{enumerate}
\end{theorem}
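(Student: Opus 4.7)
Plan. The plan is to follow the standard Picard fixed-point scheme for energy-critical parabolic equations, adapted to the operator $\Delta_m$. The first and most technical step is to establish linear estimates for the semigroup $e^{t\Delta_m}$: the energy bound $\|e^{t\Delta_m} u_0\|_{L^\infty_t X^2} \leq \|u_0\|_{X^2}$ is immediate from spectral calculus, since $-\Delta_m \geq 0$ is self-adjoint on $L^2(r\,dr)$ with $\langle -\Delta_m v, v \rangle = \|v\|_{X^2}^2$, while the critical Strichartz-type estimate
\[
  \|e^{t\Delta_m} u_0\|_{L^4_t rL^4_r} \lesssim \|u_0\|_{X^2}
\]
together with its inhomogeneous dual is obtained by conjugating $\Delta_m$ to a radial Laplacian in a higher ambient dimension where the classical heat-dispersive estimates are available, and transferring the bounds back to the $X^2$ and $rL^p$ scales. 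This is the main obstacle: the critical two-dimensional radial setting lacks some of the Sobolev embeddings available in higher dimensions, so a direct argument on $\R^2$ is awkward.

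With the linear estimates in hand, local existence (1), uniqueness, and continuous dependence (6) follow from a contraction on a small ball of $Y_I := C_t([0,T); X^2) \cap L^4_t([0,T); rL^4_r)$ for the Duhamel map
\[
  \Phi(u)(t) = e^{t\Delta_m} u_0 + \int_0^t e^{(t-s)\Delta_m} F(u(s))\,ds.
\]
The Taylor expansion $u - \tfrac{1}{2}\sin(2u) = \tfrac{2}{3} u^3 + O(u^5)$ yields the Lipschitz bound $|F(u) - F(v)| \lesssim \tfrac{1}{r^2}(|u|^2 + |v|^2)|u - v|$ on bounded sets, so H\"older in space-time gives
\[
  \|F(u) - F(v)\|_{L^{4/3}_t rL^{4/3}_r} \lesssim \bigl(\|u\|^2_{L^4_t rL^4_r} + \|v\|^2_{L^4_t rL^4_r}\bigr) \|u - v\|_{L^4_t rL^4_r},
\]
and, combined with the inhomogeneous Strichartz estimate, makes $\Phi$ a contraction on $\{u : \|u\|_{L^4_t rL^4_r} \leq 2\epsilon\}$ whenever $\|e^{t\Delta_m} u_0\|_{L^4_t rL^4_r} < \epsilon$. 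The blow-up alternative (2) is standard: were $\|u\|_{L^4_t rL^4_r}$ finite up to $T_{\text{max}}$, choosing $t_0$ close to $T_{\text{max}}$ would force the Strichartz norm of the linear flow from $u(t_0)$ below $\epsilon$ on $[t_0, T_{\text{max}})$, allowing one to restart and extend past $T_{\text{max}}$, a contradiction.

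For the remaining items, energy dissipation (3) comes from multiplying the equation by $u_t$ and integrating by parts in $r$, which produces the identity $\frac{d}{dt} E(u) = -\int u_t^2 r\,dr$; this is first justified for smooth approximate data, for which $u$ is classical, and then passed to the limit in $X^2$ using (6). For decay (4), if $\|u\|_{L^4_t rL^4_r}$ is finite on $[0,\infty)$, the tail norm on $[T,\infty)$ is arbitrarily small for large $T$, so Duhamel from time $T$ combined with the Strichartz bounds forces $\|u(t)\|_{X^2} \to 0$. Small-data global existence (5) is immediate: Strichartz gives $\|e^{t\Delta_m} u_0\|_{L^4_t rL^4_r} \lesssim \|u_0\|_{X^2}$ below $\epsilon$ globally, so the local theory extends to $I = [0,\infty)$ with the announced bound.
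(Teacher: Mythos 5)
Your overall scheme (Picard iteration on $C_t X^2 \cap L^4_t rL^4_r$, pointwise cubic bound on the nonlinearity, blow-up alternative by restarting, energy identity by multiplying by $u_t$, decay via tail-smallness of the Strichartz norm, small-data global via the homogeneous estimate) matches the paper's structure, which itself delegates most details to the critical-NLS template of Cazenave--Weissler and to the second author's thesis, spelling out only item (4). The genuine divergence is in how you propose to obtain the linear space-time estimates for $e^{t\Delta_m}$: you invoke the conjugation $u \mapsto u/r^m$ to $\dot H^1_{\mathrm{rad}}(\mathbb{R}^{2m+2})$ and import higher-dimensional heat Strichartz bounds, whereas the paper instead works in two dimensions with the derived (equivariant) map $\nabla(e^{im\theta}u)$, which solves the scalar heat equation, and then uses the standard parabolic energy estimate plus interpolation (following \cite{GGT}); the higher-dimensional conjugation is reserved, in the paper, for the profile decomposition, where the refined Sobolev inequality of Bahouri--G\'erard/Bulut genuinely requires $d \geq 3$.

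One technical point you should be explicit about if you take the higher-dimensional route: under $v = u/r^m$ the quantity $\|u/r\|_{L^4_t L^4(r\,dr)}$ equals $\|v\|_{L^4_t L^4(\mathbb{R}^{2m+2})}$ only when $m = 2$. For $m \geq 3$ the image is a \emph{weighted} $L^4$ norm, not an admissible Strichartz norm in $\mathbb{R}^{2m+2}$; the pure Lebesgue pair the conjugation produces is $(q,p) = (2m, \tfrac{2m}{m-1})$, which is $\dot H^1$-admissible in $d = 2m+2$, and one must then interpolate (e.g.\ against an $L^2_t L^\infty$-type bound, as the paper does in the profile-decomposition argument) to recover $L^4_t rL^4_r$. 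So your route works but needs this extra interpolation step, which mirrors, in a different guise, the interpolation the paper already invokes in its 2D derived-map approach. The remaining items of your outline are correct; in particular your decay argument is exactly the one the paper writes out.
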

This local well-posedness rests on space-time estimates 
for the linear evolution $e^{t \Delta_m}$.
The decay estimate
\begin{equation} 
  \| e^{t \Delta_m} \phi \|_{L^p} \lesssim t^{-(1/a - 1/p)} \| \phi \|_{L^a}, 
  \quad 1 \leq a \leq p \leq \infty \label{STdecay} 
\end{equation}
is an immediate consequence of Young's inequality and the 
explicit heat kernel.

For $\phi = \phi(r)$, $f = f(t,r)$, the space-time estimates
\begin{equation} 
\label{STLpLq}
m \geq 2: \quad
\| e^{t \Delta_m} \phi \|_{S} \lesssim \| \phi\|_{X^2} \qquad
\| \int_0^t e^{(t-s)\Delta_m} f(s) ds  \|_{S} \lesssim  
\| f \|_{L^1_t X^2 + L^2_t X^1} 
\end{equation}
where
\[
  \| u \|_S := \| u \|_{L^\infty_t X^2} + \| u \|_{L^2_t X^\infty}
  + \| u \|_{L^2_t \; r^2 L^2} + \| u_r \|_{L^2_t X^2}
\]
follow from the standard heat equation energy estimate 
for the derived function $\nabla \left( e^{i \theta} u(t,r) \right)$,
and an interpolation estimate (\cite{GGT}). 
We remark that this procedure only yields the {\it endpoint}
spaces $u \in L^2_t X^\infty \cap L^2_t r^2 L^2$, $u_r \in L^2_t X^2$
under the restriction $m \geq 2$. For $m=1$, the estimate still holds if these 
spaces are replaced by some  
$L^r_t X^p$ with $\frac{1}{r} + \frac{1}{p} = \frac{1}{2}$, 
$p < \infty$, and this suffices for the local well-posedness.
However, for $m=1$, the endpoint space would be required below in the profile 
decomposition argument. That is the reason we impose
$m \geq 2$ here, and in Theorem~\ref{hmhf below threshold theorem}.

Given the space-time estimates~\eqref{STLpLq}, 
together will the elementary pointwise inequalities
\begin{equation}  \label{Fest}
  | \frac{1}{r} F(u) | \lesssim \frac{1}{r^3} |u|^3, \quad
  | \p_r F(u) | \lesssim \frac{1}{r^3} |u|^3 + \frac{1}{r^2} u^2 |u_r|
\end{equation}
on the nonlinearity, the proof of the local well-posedness is a
standard variant of the corresponding proof for the critical NLS, based 
on the Banach fixed-point theorem (see \cite{CW}). So we will omit most of the details (which can be found in \cite{thesis}),
and just indicate how to establish decay of global solutions
with finite space-time norm.

{\it Proof of the decay of global solutions:}
assuming  $\|u\|_{L^4 rL^4([0,\infty)} < \infty$, 
we first show that also $\|u\|_{S([0,\infty))} < \infty$. 
For a given $\tilde{\epsilon}$ (to be chosen small), 
subdivide the interval $[0, \infty)$ into a finite number of intervals 
$I_j = [t_j,t_{j+1})$ so that $\|u\|_{L^4 rL^4(I_j)} \leq \tilde{\epsilon}$. 
For $t \in I_j$, by the Duhamel formula,
\begin{equation*} 
  u(t) = e^{(t-t_j) \Delta_m} u(t_j) + \int_{t_j}^t e^{(t-s)\Delta_m} F(u(s)) ds.
\end{equation*} 
Using~\eqref{STLpLq},~\eqref{Fest}, and H\"older's inequality, we arrive at
\begin{equation*}
  \|u\|_{S(I_j)} \leq C \|u(t_j)\|_{X^2} + C \|u\|_{S(I_j)} \|u\|^2_{L^4 rL^4(I_j)}
  \leq C \|u(t_j)\|_{X^2} + C \tilde{\e}^2 \|u\|_{S(I_j)}.
\end{equation*}
Choosing $\tilde{\epsilon}$ such that $ C \tilde{\epsilon}^2 \leq \frac{1}{2}$ yields
\[
  \|u\|_{S(I_j)} \leq 2 C \|u(t_j)\|_{X^2}.
\]
Since there are only finitely many $I_j$, it follows that
$\| u \|_{S([0,\infty))} < \infty$.
So in particular, for any $\epsilon > 0$, there is a $T > 0$ such that
\[
  \|u\|_{S([T,+\infty))}^3 \leq \epsilon.
\] 
By the Duhamel formula, for $t \geq T$,
\begin{equation*}
  u(t) = e^{(t-T) \Delta_m} u(T) + \int_T^t  e^{(t-s)\Delta_m} F(u) ds.
\end{equation*}
By~\eqref{STLpLq} and~\eqref{Fest} as above,
\begin{equation*}
  \left\|  \int_T^t  e^{(t-s)\Delta_m} F(u) ds \right\|_{S([T,\infty))} \lesssim 
  \| F(u) \|_{L^{4/3}([T,\infty); X^{4/3})} \lesssim 
  \| u \|_{S([T,\infty))}^3 \lesssim \e. 
\end{equation*}
As well,
\[
  \| e^{(t-T)\Delta_m} u(T) \|_{X^2} \to 0 \mbox{ as } t \to \infty,
\]  
by~\eqref{STdecay} and the density of $X^1 \cap X^2$ in $X^2$. 
Therefore
\[
  \limsup_{t \to \infty} \| u \|_{X^2} \lesssim \e.
\]
Since $\e$ was arbitrary, the result follows.
$\Box$
\begin{remark}
Uniform decay follows from energy space decay by
the elementary embedding $X^2 \subset L^\infty$:
\[
  \| u(t) \|_{X^2} \to 0 \; \implies \; \| u(t) \|_{L^{\infty}} \lesssim \| u(t) \|_{X^2} \to 0.
\]
\end{remark}

Notice that our local theory combined with the previous section on the equivalence of the $X^2$ and the energy topology implies that if $u_0 \in E_0,$ the boundary conditions persist in time, i.e. $u(t,\cdot) \in E_0$ throughout its lifespan.

\subsubsection{Stability under perturbations}

An important extension of the local existence theory is the following 
``perturbation" or ``stability" theorem. This type of result, 
which establishes the existence of a solution to (\ref{corotationalCP}) nearby a 
given approximate one, goes back to \cite{Iteam, TV}, and is by now standard.
We use the following version (for a proof see \cite{thesis}):
\begin{theorem} \label{LT perturbations}(Stability)
Let $ I =[0,T)$ and let $\tilde{u}$ be defined on $I \times [0,\infty)$ with
\[ 
  \|\tilde{u}\|_{L^{\infty}(I;X^2)} \leq M, \qquad
  \|\tilde{u}\|_{L^4(I; rL^4)} \leq L
\]
for some $M, L > 0$, and set
\begin{equation*} 
  e := \tilde{u}_t - \Delta_m\tilde{u} - F(\tilde{u})
\end{equation*}
Let $u_0 \in X$ be such that 
\begin{equation}  
  \| u_0 - \tilde{u}(\cdot,0) \|_{X^2} \leq M' \label{ID} 
\end{equation} 
for some $M' > 0$.
There is $\epsilon_1 = \epsilon_1(M,M',L) > 0$ such that if for some 
$0 < \epsilon \leq \e_1$ the smallness conditions
\begin{eqnarray}
 \|e^{t \Delta_m}(u_0 - \tilde{u}(0,\cdot) \|_{L^4(I; rL^4)} \leq \epsilon \label{exp}
\\ \| e \|_{L^{4/3}(I; rL^{4/3})} \leq \epsilon 
\end{eqnarray}
hold, then there exists a solution of (\ref{corotationalCP}) with $u(0) = u_0$ 
satisfying
\begin{equation}
  \| u - \tilde{u} \|_{L^4 rL^4(I)} \leq C(M',M,L) M'. \label{eq3}
\end{equation}
\end{theorem}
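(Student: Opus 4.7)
The plan is to set $w := u - \tilde u$, which should formally satisfy
\begin{align*}
w_t = \Delta_m w + \bigl[F(\tilde u + w) - F(\tilde u)\bigr] - e, \qquad w(0) = u_0 - \tilde u(0),
\end{align*}
and construct $w$ on $I$ via a bootstrap argument on a finite partition of $I$; the desired solution of \eqref{corotationalCP} is then $u = \tilde u + w$, and \eqref{eq3} drops out of the bootstrap bound. The key analytic ingredient is a trilinear estimate: since $F(u) = m^2 r^{-2}(u - \sin(2u)/2)$ vanishes cubically at $u=0$, a Taylor expansion gives the pointwise bound $|F(\tilde u + w) - F(\tilde u)| \lesssim r^{-2}(|\tilde u|^2 + |w|^2)|w|$ and an analogous estimate for $\partial_r[F(\tilde u + w) - F(\tilde u)]$, in direct parallel with \eqref{Fest}. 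Combined with H\"older's inequality and \eqref{STLpLq}, this yields, on any time slab $J$,
\begin{align*}
\Bigl\| \int_{J_{-}}^{t} e^{(t-s)\Delta_m}\bigl[F(\tilde u + w) - F(\tilde u)\bigr](s)\, ds \Bigr\|_{S(J)} \lesssim \bigl( \|\tilde u\|_{L^4 rL^4(J)}^2 + \|w\|_{L^4 rL^4(J)}^2\bigr) \|w\|_{S(J)},
\end{align*}
where $J_{-}$ denotes the left endpoint of $J$.

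Since $\|\tilde u\|_{L^4 rL^4(I)} \leq L$, I would next partition $I = \bigcup_{k=0}^{N-1} I_k$ with $I_k = [t_k, t_{k+1})$ so that $\|\tilde u\|_{L^4 rL^4(I_k)} \leq \eta$ on each piece, where $\eta$ is a small parameter fixed to absorb the trilinear constant, and $N = N(L,\eta)$. On a single $I_k$, the Duhamel formula combined with \eqref{STLpLq}, the trilinear bound above, and the error hypothesis $\|e\|_{L^{4/3}(I;rL^{4/3})} \leq \epsilon$ yields
\begin{align*}
\|w\|_{S(I_k)} \leq C\|e^{(t-t_k)\Delta_m} w(t_k)\|_{S(I_k)} + C\bigl(\eta^2 + \|w\|_{L^4 rL^4(I_k)}^2\bigr)\|w\|_{S(I_k)} + C\epsilon.
\end{align*}
Choosing $\eta$ so that $C\eta^2 \leq 1/4$ and running a continuity argument in $\|w\|_{L^4 rL^4(I_k)}$ absorbs the nonlinear piece, leaving $\|w\|_{S(I_k)} \lesssim \|e^{(t-t_k)\Delta_m} w(t_k)\|_{S(I_k)} + \epsilon$. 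Existence of the solution $w$ on $I_k$ is obtained by a companion Banach fixed-point argument on the same space, using the same estimates.

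To iterate across intervals I would expand
\begin{align*}
e^{(t-t_k)\Delta_m} w(t_k) = e^{t\Delta_m}(u_0 - \tilde u(0)) + \int_0^{t_k} e^{(t-s)\Delta_m}\bigl[F(\tilde u + w) - F(\tilde u) - e\bigr](s)\, ds,
\end{align*}
so that the $L^4 rL^4(I_k)$ norm of the first term is controlled by the hypothesis \eqref{exp} on the free evolution, while the integral piece is bounded by the trilinear estimate applied on $[0,t_k]$ plus the error bound. This produces a recurrence $A_{k+1} \leq C_1(A_k + \epsilon)$ for $A_k := \|w\|_{S([0,t_k))}$, with initial value $A_0 \lesssim M' + \epsilon$ coming from \eqref{ID} and \eqref{STLpLq} applied on $I_0$; iterating gives $A_N \lesssim C_1^N (M' + \epsilon)$. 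Choosing $\epsilon_1 = \epsilon_1(M, M', L)$ small enough that $C_1^N \epsilon_1$ and all intermediate $\|w\|_{L^4 rL^4(I_k)}$ stay below the bootstrap threshold $\eta$ closes the argument on every interval and yields \eqref{eq3}. The main technical obstacle is not any one step but the bookkeeping: constants accumulate multiplicatively across the $N = N(L)$ subintervals, forcing $\epsilon_1$ to depend (essentially exponentially) on $L$. The trilinear estimate itself is routine given \eqref{Fest}, but the splitting of $F(\tilde u + w) - F(\tilde u)$ into pieces controlled by either $\|\tilde u\|$ or $\|w\|$ in $L^4 rL^4$, leaving one factor $\|w\|_S$ free for absorption, must be done with care.
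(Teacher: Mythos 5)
The paper omits the proof and cites the thesis, so I cannot compare line by line, but the overall strategy you sketch --- partition $I$ into $N(L)$ slabs where $\|\tilde u\|_{L^4 rL^4}$ is small, Duhamel plus trilinear estimate plus absorption on each slab, induct across slabs --- is the standard Tao--Visan/Colliander--Keel--Staffilani--Takaoka--Tao template that the authors reference, so the skeleton is right. There is, however, a genuine norm-bookkeeping gap that would cause the argument to fail as written.

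You run the bootstrap on the full $S$-norm, but neither hypothesis can make that norm small. First, the error $e$ is only assumed small in $L^{4/3}(I; rL^{4/3})$, which controls $e/r$ but says nothing about $e_r$; the inhomogeneous estimate \eqref{STLpLq} needs the source in $L^1_t X^2 + L^2_t X^1$, both of which require $L^p$ control of $e_r$, so you cannot bound $\bigl\|\int e^{(t-s)\Delta_m}e(s)\,ds\bigr\|_{S(I_k)}$ by $C\epsilon$ from the given hypothesis. Second, and more seriously, $w(t_k)$ has $X^2$-size comparable to $M'$ (not $\epsilon$), so $A_0 := \|w\|_{S(I_0)}$ is of size $M'$, and your recursion then forces $\|w\|_{L^4 rL^4(I_k)} \lesssim A_k \sim C_1^k M'$. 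For the bootstrap threshold $\|w\|_{L^4 rL^4(I_k)} \leq \eta$ to hold, you would therefore need $M' \ll \eta$ --- but the theorem allows $M'$ to be large, with only $\epsilon$ required small. The whole point of the separate hypotheses \eqref{ID} and \eqref{exp} is that the initial discrepancy can be $O(M')$ in $X^2$ while its \emph{free evolution} is $O(\epsilon)$ in the scaling-critical norm, and the recursion must exploit the latter, not the former.

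The fix is to replace $S$ by the scaling-critical pair $L^4 rL^4$ and its dual $L^{4/3} rL^{4/3}$ throughout the induction: the dual Strichartz estimate $\bigl\|\int_0^t e^{(t-s)\Delta_m}f(s)\,ds\bigr\|_{L^4 rL^4} \lesssim \|f\|_{L^{4/3} rL^{4/3}}$ handles both the error term and the nonlinear difference (your pointwise bound on $F(\tilde u + w) - F(\tilde u)$, fed through H\"older, lands in $L^{4/3}rL^{4/3}$ directly). Then the quantity to track is $B_k := \|e^{(t-t_k)\Delta_m}w(t_k)\|_{L^4 rL^4([t_k,T))}$, which starts at $B_0 \leq \epsilon$ by \eqref{exp}, and the Duhamel expansion of $w(t_k)$ across $[0,t_k]$ (exactly as you describe) propagates this with constants that blow up at most exponentially in $N(L)$. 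That keeps $\|w\|_{L^4 rL^4(I_k)}$ of size $C_1^k\epsilon$, below the absorption threshold for $\epsilon \leq \epsilon_1(M,M',L)$, independently of whether $M'$ is large. The full $S$-norm bound on $w$, if needed for the existence step, then follows \emph{a posteriori} (of size $M' + C\epsilon$) once the $L^4 rL^4$ norm is controlled. Your trilinear estimates and the partition-plus-induction scaffolding are otherwise fine.
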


\subsection{Profile decomposition}

The following proposition is the main tool (together with the stability theorem above)
in the concentration-compactness approach to establishing global existence and decay. 
The idea of a profile decomposition is to characterize the loss of compactness in some embedding, and to recover some compactness. 
It can be traced back to \cite{L,BC,St1,SU} and their modern ``evolution'' counterparts \cite{BG,KMa,KMb}.

%   In what follows, all the functions in $X^2$ are radial. \\
 \begin{proposition} \label{hmhfPD}
Let $\{u_n\}_n$ be a bounded sequence of radial functions in $X^2$. Then, after possibly passing to a subsequence (in which case, we rename it $u_n$ again), there exist a family of radial functions $ \{ \phi^j \}^{\infty}_{j=1} \subset X^2$ and scales $\lambda^j_n > 0$ such that
for each $J \geq 1$, 
\begin{equation} u_n (x) = \sum_{j=1}^J  \phi^j( \frac{x}{\lambda^j_n}) +  w^J_n(x), \end{equation}
where $w^J_n \in X^2$ is such that
\begin{equation} 
\lim_{J \rightarrow \infty} \limsup_{n \to \infty} \| e^{t\Delta_m}  w^J_n \|_{L^4_t rL^4_r} = 0, \label{holygrail} 
\end{equation}
\begin{equation}  
w^J_n(\lambda^j_n x) \rightharpoonup 0 \hspace{0.6em} \text{in} \hspace{0.6em} X^2, \hspace{0.6em}  \forall j \leq J. 
\label{wwconv hmhf} 
\end{equation}
Moreover, the scales are asymptotically orthogonal in the sense that 
\begin{equation} 
\frac{\lambda^j_n}{\lambda^{i}_n} + \frac{\lambda^{i}_n}{\lambda^j_n} \rightarrow + \infty, \hspace{0.6em} \forall i \neq j .
\label{scale orth2}
\end{equation}
Furthermore, $ \text{for all} \hspace{0.6em} J \geq 1$ we have the following decoupling properties:
\begin{equation}
\|u_n\|^2_{X^2} = \sum_{j=1}^J \| \phi^j \|^2_{X^2} + \| w^J_n \|^2_{X^2} + o_n(1) \label{H1dec} \end{equation}
\begin{equation} E(u_n) = \sum_{j=1}^J  E(\phi^j) + E (w^J_n) + o_n(1). \label{ED} \end{equation}
\end{proposition}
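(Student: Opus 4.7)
\emph{Proof plan.} The strategy is the standard iterative bubble-extraction scheme of Bahouri--G\'erard and Keraani, adapted to the parabolic semigroup $e^{t\Delta_m}$ acting on initial data in $X^2$. Because we are dealing with radial (and indeed $m$-equivariant) functions, the only symmetry that can produce loss of compactness is dilation $\phi \mapsto \phi(\cdot/\lambda)$; in the parabolic setting time translations are absorbed by smoothing and need not be tracked.

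The heart of the argument is a \emph{one-profile extraction lemma}: if $\{u_n\}$ is bounded in $X^2$ and
$\eta := \limsup_{n \to \infty} \|e^{t\Delta_m} u_n\|_{L^4_t rL^4_r} > 0$,
then, after passing to a subsequence, there exist scales $\lambda_n > 0$ and $\phi \in X^2$ with $\|\phi\|_{X^2} \gtrsim \eta^{\alpha}$ for some $\alpha > 0$, such that $u_n(\lambda_n \cdot) \rightharpoonup \phi$ weakly in $X^2$. To prove it I would use the substitution $u(r) = r^m v(r)$, which identifies $\Delta_m$ with the radial Laplacian $\Delta_{2m+2}$ via $\Delta_m(r^m v) = r^m\bigl(v_{rr} + (2m+1)r^{-1} v_r\bigr)$, together with the matching $\|u\|_{X^2}^2 = c_m \|\nabla v\|_{L^2(\mathbb{R}^{2m+2})}^2$. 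In dimension $d = 2m+2 \geq 6$, standard Strichartz/refined-Sobolev estimates of G\'erard--Bahouri type yield an inequality of the form $\|e^{t\Delta_m} u\|_{L^4_t rL^4_r} \lesssim \|u\|_{X^2}^{\theta} \|u\|_Y^{1-\theta}$ for a suitable weaker (Besov-type) norm $Y$; pulling back through $r^m$ produces the concentration scale and the weak limit. This is where the restriction $m \geq 2$ is essential, so that the endpoint spaces in \eqref{STLpLq} are available.

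Given the extraction lemma, I iterate: set $w_n^0 := u_n$ and, having chosen $(\phi^i, \lambda_n^i)_{i<J}$, let $\nu^J := \limsup_n \|e^{t\Delta_m} w_n^{J-1}\|_{L^4_t rL^4_r}$; if $\nu^J > 0$ extract $(\phi^J, \lambda_n^J)$ and define
$w_n^J(r) := w_n^{J-1}(r) - \phi^J(r/\lambda_n^J)$.
The weak vanishing \eqref{wwconv hmhf} holds by construction. Asymptotic orthogonality of the scales \eqref{scale orth2} is forced by contradiction: if $\lambda_n^i/\lambda_n^j$ had a finite positive limit for some $i < j$, rescaling by $\lambda_n^j$ would convert $\phi^i(\lambda_n^j \cdot /\lambda_n^i)$ into a nonzero weak limit of $w_n^{j-1}(\lambda_n^j \cdot)$, contradicting \eqref{wwconv hmhf} at stage $j$. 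Orthogonality of scales together with the weak convergence then gives the Pythagorean decoupling \eqref{H1dec} by a standard expansion of $\|w_n^{J-1}\|_{X^2}^2$ and the fact that scale-separated profiles are asymptotically orthogonal in the $X^2$ inner product. Summing \eqref{H1dec} yields $\sum_j \|\phi^j\|_{X^2}^2 \leq \sup_n \|u_n\|_{X^2}^2 < \infty$, hence $\|\phi^J\|_{X^2} \to 0$, and via the lower bound $\|\phi^J\|_{X^2} \gtrsim (\nu^J)^{\alpha}$ one deduces $\nu^J \to 0$, which is \eqref{holygrail}.

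The energy decoupling \eqref{ED} is not immediate from \eqref{H1dec} because $E$ is nonlinear, but it follows by writing $\sin^2(u) = u^2 - \tfrac{1}{3}u^4 + O(u^6)$, bounding the error in terms of $rL^4$-type norms, and applying a Brezis--Lieb-type argument, which decouples such $L^4$-type norms over profiles at asymptotically separated scales (again using \eqref{scale orth2}). The main obstacle in the whole proof is the one-profile extraction lemma: because $\dot{H}^1 \not\hookrightarrow L^q$ in two dimensions, one cannot argue directly on $\mathbb{R}^2$, and the higher-dimensional reduction through $u = r^m v$ together with the verification that the resulting decomposition faithfully translates back to match the $L^4_t rL^4_r$ norm and the $X^2$ weak topology is the delicate step.
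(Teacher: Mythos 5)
Your plan is essentially the paper's: reduce to $\dot H^1(\mathbb R^{2m+2})$ via $u = r^m v$, invoke refined Sobolev (Bahouri--G\'erard / Bulut) for a Besov-type improvement of the heat-flow estimate, and interpolate back to $L^4_t rL^4_r$ using the admissible pair $(2m, \tfrac{2m}{m-1})$ in dimension $2m+2$. Two differences are worth noting. First, organizationally: you propose to prove a one-profile extraction lemma and iterate, whereas the paper cites G\'erard's $L^2$ profile decomposition outright and upgrades it to $\dot H^1$ in one step; both routes are standard and lead to the same place, though your version makes the lower bound $\|\phi^J\|_{X^2} \gtrsim (\nu^J)^\alpha$ carry the burden of proving \eqref{holygrail}, which requires you to actually exhibit the refined inequality $\|e^{t\Delta_m} u\|_{L^4_t rL^4_r} \lesssim \|u\|_{X^2}^\theta \|u\|_Y^{1-\theta}$ as a lemma rather than only using the Besov smallness of the residual, as the paper does. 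Second, and more substantively, for the energy decoupling \eqref{ED} you propose Taylor-expanding $\sin^2$ and running a Brezis--Lieb argument. This can be made to work (note $X^2 \subset L^\infty$ gives the needed pointwise boundedness), but it is considerably heavier than what the paper does: the identity
\[
\sin^2(a+b) - \sin^2(a) - \sin^2(b) = \tfrac12 \sin(2a)\sin(2b) - 2\sin^2(a)\sin^2(b),
\]
giving $|\sin^2(a+b)-\sin^2(a)-\sin^2(b)| \lesssim |a||b|$, reduces the problem after $J-1$ iterations to bilinear cross terms of the form $\int r^{-2} |\phi^i(\tfrac{r}{\lambda^i_n})|\,|\phi^j(\tfrac{r}{\lambda^j_n})| \, r\,dr$ and $\int r^{-2}|w^J_n|\,|\phi^j(\tfrac{r}{\lambda^j_n})|\,r\,dr$, which vanish by rescaling, H\"older, and compactness (Rellich on bounded annuli) directly, with no need to track quartic and higher remainders. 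If you pursue the Taylor route, you would need to carry the orthogonality-of-scales argument at each polynomial order, which is exactly what the trigonometric identity collapses into a single bilinear estimate. I would recommend replacing that portion of your plan with the explicit identity.
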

The procedure through which one establishes such a decomposition has become standard by now (e.g. see \cite{BG,K}), thus we will only present the equation-specific parts of the argument.

There are two general roadmaps to follow in establishing such a decomposition. To get the convergence of the error $w^J_n$ in the appropriate space-time norm, one can either use 
a refinement of the space-time estimates on the linear propagator, 
or a refinement of a Sobolev inequality through which the refinement of the space-time estimates will follow via interpolation arguments. 
The first approach would require more work in our case: arguments used in the 
Schr\"{o}dinger case cannot be applied directly 
due to the lack of an analogue of the restriction theorems used.
For the second approach, dimension two is very special due to the lack of the 
usual embeddings.

Our strategy is to first establish (\ref{holygrail})
for the homogeneous linear heat equation for radial functions in higher dimensions. We make use of a refined Sobolev inequality, first proved in \cite{BG} for $d=3$ and later generalized to 
$d > 3$ in \cite{Bu}. Then we convert this estimate to our 2d spaces by a change of 
variable, and use interpolation again to obtain the desired convergence.

\begin{definition}
An exponent pair $(q,p)$ is $L^2$-\textit{admissible} in dimension $d$ if
\begin{equation} 
  \frac{2}{q} + \frac{d}{p} = \frac{d}{2}, 
\label{L2ad} 
\end{equation}
and $\dot{H}^1$-\textit{admissible} if
\begin{equation} 
  \frac{2}{q} + \frac{d}{p} = \frac{d-2}{2}. 
\label {H1ad} 
\end{equation}
\end{definition}
We define the following Besov norm on $L^2:$ 
\begin{equation*} 
I_k(f) := \left(\int_{2^k \leq |\xi| \leq 2^{k+1}} |\hat{f}(\xi)|^2 d \xi\right)^{1/2},  \hspace{1.5em} \| f \|_B := \sup_{k \in \mathbb{Z}} I_k(f).  
\end{equation*}
The following refinement of the Sobolev inequality is from \cite{Bu}  (Lemma 3.1):
\begin{lemma} (Refined Sobolev)
For $d \geq 3$ there is a constant $C = C(d) > 0$ such that for every 
$u \in \dot{H}^1 (\mathbb{R}^d)$, we have 
\begin{equation} 
  \| u \|_{L^p} \leq C \| \nabla u \|^{\frac{2}{p}}_{L^2} \|\nabla u \|^{1-\frac{2}{p}}_B, 
\label{refined} 
\end{equation} 
where $ p = 2^* = \frac{2d}{d-2}$.
\end{lemma}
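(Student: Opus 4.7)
The plan is to prove the refined Sobolev inequality via Littlewood--Paley decomposition, adapting the Bahouri--G\'{e}rard strategy in a form that yields strong $L^p$ (not just weak-type $L^{p,\infty}$) directly. With $u = \sum_k u_k$ for dyadic frequency projections $u_k = P_k u$ onto the shell $\{2^k \leq |\xi| < 2^{k+1}\}$, Plancherel gives
\[
I_k(\nabla u) \sim 2^k \|u_k\|_{L^2}, \quad \|\nabla u\|_B \sim \sup_k 2^k \|u_k\|_{L^2}, \quad \|\nabla u\|_{L^2}^2 \sim \sum_k 2^{2k}\|u_k\|_{L^2}^2,
\]
and it suffices to prove $\|u\|_{L^p}^p \lesssim \|\nabla u\|_B^{p-2}\|\nabla u\|_{L^2}^2$.

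For each $\lambda > 0$ I would split $u = u_{\leq N(\lambda)} + u_{>N(\lambda)}$, choosing $N(\lambda)$ so that the low-frequency part stays below $\lambda/2$ in $L^\infty$. By Bernstein,
\[
\|u_{\leq N}\|_{L^\infty} \lesssim \sum_{k\leq N} 2^{kd/2}\|u_k\|_{L^2} \lesssim \|\nabla u\|_B\sum_{k\leq N} 2^{k(d-2)/2} \lesssim \|\nabla u\|_B\cdot 2^{N(d-2)/2},
\]
where the geometric sum converges precisely because $d\geq 3$. Picking $2^{N(\lambda)(d-2)/2}\sim \lambda/\|\nabla u\|_B$ forces $\{|u|>\lambda\} \subseteq \{|u_{>N(\lambda)}|>\lambda/2\}$, and Chebyshev in $L^2$ combined with Plancherel orthogonality on the high-frequency tail yields
\[
|\{|u|>\lambda\}| \lesssim \lambda^{-2}\sum_{k>N(\lambda)}\|u_k\|_{L^2}^2.
\]

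The subtle step --- what distinguishes a strong-$L^p$ proof from a weak-$L^{p,\infty}$ one --- is to feed this bound into the layer-cake formula \emph{without} first optimizing the cutoff in $\lambda$, and instead to interchange sum and integral by Fubini:
\[
\|u\|_{L^p}^p \sim \int_0^\infty \lambda^{p-1}|\{|u|>\lambda\}|\,d\lambda \lesssim \sum_k \|u_k\|_{L^2}^2 \int_0^{\lambda(k)} \lambda^{p-3}\,d\lambda,
\]
with $\lambda(k) \sim \|\nabla u\|_B \cdot 2^{k(d-2)/2}$ the threshold at which $N(\lambda) = k$. Since $p = 2d/(d-2) > 2$ for $d\geq 3$, the inner integral converges to $\lambda(k)^{p-2}/(p-2) \sim \|\nabla u\|_B^{p-2}\cdot 2^{k(d-2)(p-2)/2}$, and the critical-exponent identity $(d-2)(p-2)/2 = 2$ (equivalent to $p = 2^*$) collapses the geometric factor to $2^{2k}$. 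The surviving sum $\sum_k 2^{2k}\|u_k\|_{L^2}^2 \sim \|\nabla u\|_{L^2}^2$ then closes the bound, and taking $p$-th roots yields the stated inequality. The hypothesis $d\geq 3$ enters essentially at two points --- the Bernstein geometric series requires $d-2>0$, and the $\lambda^{p-3}$ integrability near zero requires $p>2$ --- and the matching $(d-2)(p-2)/2=2$ is what lets the $k$-sum reassemble into the full Dirichlet energy.
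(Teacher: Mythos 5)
The paper does not prove this lemma; it is cited verbatim from Bulut (Lemma 3.1 of \cite{Bu}), itself a $d\ge 4$ extension of the $d=3$ case in Bahouri--G\'erard \cite{BG}. Your argument is correct and is precisely the standard proof used there: Littlewood--Paley decomposition, Bernstein on the low frequencies (where $d-2>0$ makes the geometric sum converge), Chebyshev and Plancherel on the high-frequency tail, and the layer-cake formula with a Fubini interchange. You are right to flag the Fubini step as the essential point that upgrades from weak-$L^{p,\infty}$ to strong $L^p$: optimizing in $\lambda$ before integrating would only give the weak bound, whereas swapping $\sum_k$ and $\int d\lambda$ and then using the exponent identity $(d-2)(p-2)/2 = 2$ reassembles the full Dirichlet energy. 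No gaps.
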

The next result, from \cite{G}, provides a decomposition of bounded sequences in 
$L^2(\mathbb{R}^d)$ (for a different, but equivalent Besov norm). 
Here, we specialize to radial functions. 
\begin{proposition}
Let $\{ f_n \}_n $ be a bounded sequence of radially symmetric functions in $L^2(\mathbb{R}^d), d \geq 3$. 
Then there exist a subsequence (still denoted by $\{f_n\}_n$), a sequence of scales
$\{\lambda^j_n\}_n \subset (0,\infty)$ satisfying (\ref{scale orth2}),
and bounded radial $ \{g^j\}_j, \{r_n^J\}_n \subset L^2(\mathbb{R}^d),$ such that for every $J \geq 1$, $x \in \mathbb{R}^d$,
\[
\begin{split}
 &f_n (x) = \sum_{j=1}^J  \frac{1}{(\lambda^j_n)^{d/2}} g^j(\frac{x}{\lambda^j_n})+  r^J_n(x), \\
 &\|f_n\|^2_{L^2} = \sum_{j=1}^J \| g^j \|^2_{L^2} + \| r^J_n \|^2_{L^2} + o_n(1),\\
 &\lim_{J \rightarrow \infty} \limsup_{n \rightarrow \infty} \|r^J_n\|_B = 0.
\end{split}
\]
\end{proposition}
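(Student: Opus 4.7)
The plan is to construct the decomposition by iterative extraction of profiles, following the strategy of Gérard \cite{G}. I proceed in three main stages: extraction of a single profile via the Besov-norm defect, verification of nontriviality and scale orthogonality, and an iteration argument controlled by the $L^2$ Pythagorean identity.

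For the first stage, set $\alpha_0 := \limsup_n \|f_n\|_B$. If $\alpha_0 = 0$, take all $g^j = 0$ and $r_n^J := f_n$. Otherwise, by the definition of the Besov norm, after passing to a subsequence there exist $k_n \in \mathbb{Z}$ with $I_{k_n}(f_n) \geq \alpha_0/2$. Set $\lambda_n^1 := 2^{-k_n}$ and form the $L^2$-isometric rescaling $\tilde f_n(x) := (\lambda_n^1)^{d/2} f_n(\lambda_n^1 x)$, for which a change of variables gives $I_0(\tilde f_n) = I_{k_n}(f_n) \geq \alpha_0/2$. Extract a weakly $L^2$-convergent subsequence $\tilde f_n \rightharpoonup g^1$, producing a radial $L^2$ limit.

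The hard part, and the main obstacle, is verifying the quantitative non-triviality bound $\|g^1\|_{L^2} \gtrsim \alpha_0$. I would use the Littlewood-Paley projector $P$ with smooth Fourier multiplier supported in $\{1/2 \leq |\xi| \leq 4\}$ and equal to $1$ on $\{1 \leq |\xi| \leq 2\}$, so that $\|P\tilde f_n\|_{L^2} \geq I_0(\tilde f_n) \geq \alpha_0/2$. Since $P\tilde f_n$ has Fourier support in a fixed compact annulus, Bernstein's inequality bounds it uniformly in every Sobolev space $H^s(\mathbb{R}^d)$ and in $L^\infty$. Radiality combined with uniform $H^1$-boundedness invokes the Strauss compactness embedding (valid precisely because $d \geq 3$) to give, along a subsequence, strong convergence $P\tilde f_n \to Pg^1$ in some $L^p$ with $2 < p < 2^*$. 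Interpolating this strong $L^p$ convergence against the uniform $L^\infty$ bound transfers the $L^2$ lower bound on $P\tilde f_n$ to a lower bound on $\|Pg^1\|_{L^2}$, yielding $\|g^1\|_{L^2}^2 \geq c(d)\alpha_0^2$ for a constant depending only on dimension.

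Define $r_n^1(x) := f_n(x) - (\lambda_n^1)^{-d/2} g^1(x/\lambda_n^1)$. Weak convergence of $\tilde f_n$ under the $L^2$-isometric rescaling yields the Pythagorean identity $\|f_n\|_{L^2}^2 = \|g^1\|_{L^2}^2 + \|r_n^1\|_{L^2}^2 + o_n(1)$. Set $\alpha_1 := \limsup_n \|r_n^1\|_B$ and, if positive, iterate the extraction on $r_n^1$ to produce $\lambda_n^2, g^2, r_n^2$, and so on. Scale orthogonality \eqref{scale orth2} is enforced by contradiction: if $\lambda_n^j/\lambda_n^i$ stayed bounded and bounded away from zero along a subsequence for some $i < j$, then rewriting $r_n^{j-1}$ in terms of the previously extracted profiles and using the weak convergence $(\lambda_n^i)^{d/2} r_n^i(\lambda_n^i\,\cdot\,) \rightharpoonup 0$ forced at the $i$-th step would make $g^j = 0$, contradicting the nontriviality lower bound. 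Finally, iterating the Pythagorean identity produces
\[
\sum_{j=1}^J \|g^j\|_{L^2}^2 + \|r_n^J\|_{L^2}^2 = \|f_n\|_{L^2}^2 + o_n(1),
\]
so $\sum_{j=1}^\infty \alpha_{j-1}^2 \lesssim \limsup_n \|f_n\|_{L^2}^2 < \infty$, and hence $\alpha_J \to 0$ as $J \to \infty$, which is exactly the required Besov-norm decay $\lim_J \limsup_n \|r_n^J\|_B = 0$.
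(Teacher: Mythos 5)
The paper does not prove this proposition; it cites it directly from Gérard \cite{G} and specializes to the radial case, so yours is an independent attempt. The decisive gap is the nontriviality step. You claim that ``interpolating this strong $L^p$ convergence against the uniform $L^\infty$ bound transfers the $L^2$ lower bound on $P\tilde f_n$ to a lower bound on $\|Pg^1\|_{L^2}$.'' But interpolation goes the other way: for $2 < p < \infty$ one has $\|f\|_{L^p} \leq \|f\|_{L^2}^{2/p}\|f\|_{L^\infty}^{1-2/p}$, and there is no inequality bounding $\|f\|_{L^2}$ from \emph{above} by $\|f\|_{L^p}$ and $\|f\|_{L^\infty}$ when $p>2$. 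Consequently a unit $L^2$ lower bound on $P\tilde f_n$, together with a uniform $L^\infty$ bound and strong $L^p$ convergence $P\tilde f_n \to Pg^1$, does \emph{not} force $\|Pg^1\|_{L^2}$ to be bounded below.

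The mechanism by which this actually fails is $L^2$-mass escaping radially to spatial infinity, which radiality alone does not prevent. Take $f_n$ a smooth, radial, $L^2$-normalized bump concentrated in the shell $\{n \le |x| \le n+1\}$, so $\|f_n\|_{L^\infty}\sim n^{-(d-1)/2}$. Then $P\tilde f_n$ (with no nontrivial rescaling available, since the dyadic frequency concentration sits at $k\approx 0$) still has $L^2$-norm bounded below, has $\|P\tilde f_n\|_{L^\infty}\lesssim n^{-(d-1)/2}\to 0$ by Young's inequality, hence $\|P\tilde f_n\|_{L^p}\to 0$ for every $p>2$ by the genuine interpolation, and converges to $0$ weakly in $L^2$. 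So $g^1=0$, and your conclusion $\|g^1\|_{L^2}^2 \gtrsim \alpha_0^2$ is false for this sequence. The Strauss compact embedding into $L^p$ (which is also valid for $d=2$, not only $d\ge 3$) cannot see this loss, because strong $L^p$ convergence for $p>2$ carries no information about the $L^2$-tail. To close this step you would need a quantitatively different mechanism to prevent radial escape --- essentially the content of Gérard's Besov-level analysis, which the paper invokes as a black box rather than reproving.
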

Applying the above result to a bounded radially symmetric sequence $\{v_n\} \subset \dot{H}^1(\mathbb{R}^d),$ we conclude that there is a subsequence (again denoted by $\{v_n\}$), a family of scales $\lambda^j_n$ satisfying~\eqref{scale orth2}), and 
radial $\psi^j \in \dot H^1$ such that for every $J \geq 1$,
\begin{equation} 
  v_n (x) = \sum_{j=1}^J  \psi_n^j(x) +  \tilde{w}^J_n(x)
\label{splitting}
\end{equation} 
where $\psi_n^j(x):=\frac{1}{(\lambda^j_n)^{\frac{d}{2} - 1}} \psi^j(\frac{x}{\lambda^j_n}),$
%$ \{\nabla v^j_n\} = \lambda^j_n-$ oscillatory, $\{\nabla w^J_n\} = \lambda^j_n-$singular, $\forall j \leq J,$ 
\begin{equation*} \|v_n\|^2_{\dot{H}^1} = \sum_{j=1}^J \| \psi^j \|^2_{\dot{H}^1} + \| \tilde{w}^J_n \|^2_{\dot{H}^1} + o_n(1) \end{equation*}
and \begin{equation} \limsup_{n} \| \nabla \tilde{w}^J_n \|_B \xrightarrow{ J \to\infty} 0 \label{error_B} .
\end{equation}
Denote the linear propagator for the homogeneous heat equation 
\begin{equation} v_t = \Delta v \label{heat} \end{equation}
on $\mathbb{R}^d$ by $S(t):=e^{t\Delta}$.
Evolving \eqref{splitting} by the linear propagator we get
\begin{equation*} 
S(t) v_n = \sum_{i=1}^J S(t) \psi^j_n + S(t) \tilde{w}^J_n.
\end{equation*}
Our first goal is to estimate $S(t) \tilde{w}^J_n$ in an appropriate space-time norm. If $\sigma$ is a function on $\mathbb{R}^d,$ we define $\sigma(D)$ by \begin{equation*}\widehat{\sigma(D) f} (\xi) := \sigma(\xi) \hat{f} (\xi).\end{equation*} Also define
$\sigma_k (\xi) := \chi_{2^k \leq |\xi| \leq 2^{k+1}} (\xi) , k \in \mathbb{Z}.$  
Then if $v$ solves \eqref{heat}, by commutation of Fourier multipliers with derivatives, 
$\sigma_k (\xi) v$ is also solves \eqref{heat}.
By standard decay estimate~\eqref{STdecay} for~\eqref{heat}, for any $k$ 
% (since $ \|z(t)\|^2_{\dot{H}^1} = E(z(t)) \leq E(z_0) =  \|z_0\|^2_{\dot{H}^1}$) :
\begin{equation*} 
\|\nabla(\sigma_k (\xi) w(t,\cdot )\|_{L^2} \leq \|\nabla(\sigma_k (\xi) w(0,\cdot)\|_{L^2}. 
\end{equation*}
Using Plancherel's identity and properties of the Fourier transform and the definition of multipliers: 
\begin{equation*} 
\|\nabla (\sigma_k v) \|_{L^2} = \|\widehat{ \nabla(\sigma_k  v)}\|_{L^2} = 
\| |\xi| \sigma_k  \hat{v} \|_{L^2} = I_k(\nabla v).
\end{equation*}
Hence, by taking supremum in $k,$ 
\begin{equation*}
\| \nabla v(t,\cdot) \|_B \leq  \| \nabla v(0,\cdot) \|_B. 
\end{equation*}
This general observation implies  
\begin{equation*} 
\|\nabla (S(t) \tilde{w}^J_n) \|_{L^{\infty}_t B_x} \leq \| \nabla \tilde{w}^J_{n} \|_B. \end{equation*}
Then, due to \eqref{error_B} we conclude
\begin{equation}  \lim_{J \rightarrow \infty} \limsup_{n \rightarrow \infty} \|\nabla (S(t) \tilde{w}^J_n) \|_{L^{\infty}_t B_x} = 0 \label{L00B}. \end{equation}
For every $t > 0, $ \eqref{refined} gives: 
\[
\begin{split}
\| S(t) \tilde{w}^J_n \|_{L^{\frac{2d}{d-2}}_x} &\lesssim \|\nabla(S(t) \tilde{w}^J_n) \|^{\frac{2(d-2)}{2d}}_{L^2} \cdot \|\nabla(S(t) \tilde{w}^J_n) \|^{1-\frac{2(d-2)}{2d}}_B \\
&\lesssim \|\nabla \tilde{w}^J_{n} \|^{\frac{2(d-2)}{2d}}_{L^2} \cdot \|\nabla(S(t) \tilde{w}^J_n) \|^{1-\frac{2(d-2)}{2d}}_B
\end{split}
\]
and so
\[
  \| S(t) \tilde{w}^J_n \|_{L^{\infty}_t L^{\frac{2d}{d-2}}_x} 
  \lesssim \|\nabla \tilde{w}^J_{n}\|_{L^2}^{\frac{2(d-2)}{2d}} \cdot 
  \|\nabla(S(t) \tilde{w}^J_n) \|^{1-\frac{2(d-2)}{2d}}_{L^{\infty}_t B_x}.
\]
Since $\|\nabla \tilde{w}^J_{n}\|_{L^2}$ is uniformly bounded, using \eqref{L00B}: 
\begin{equation}  
\lim_{J \rightarrow \infty} \limsup_{n \rightarrow \infty} \| S(t) \tilde{w}^J_n \|_{L^{\infty}_t L^{\frac{2d}{d-2}}_x} = 0 \label{L00L2*}. 
\end{equation}

It is straightforward to verify, using the Hardy inequality in dimension $d = 2m+2$,
that the map
\[
  X^2 \ni u = u(r) \mapsto v(r) = \frac{u(r)}{r^m} \in \dot H^1_{rad}(\R^d)
\]
is an isomorphism (e.g., see Lemma 4 in \cite{CKM}) and moreover
\[
  u_t = \Delta_m u \;\; \iff \;\; v_t = \Delta_{\R^d} v.
\]
Moreover, if $(r,p)$ is an $L^2$-admissible pair for $d=2$, i.e. 
$\frac{1}{r} + \frac{1}{p} = \frac{1}{2}$, then
\begin{equation*} 
\begin{split}
  \|\frac{u}{r}\|^r_{L^r_t(I; L^p_r(rdr))} &= 
  \int_I \left (\int_0^{\infty}  \frac{r^{1-p}}{r^{2m+2}} |u|^p   r^{2m+1} dr\right)^{r/p} \\
  &= \int_I\left (\int_0^{\infty} \frac{r^{1-p}}{r^{2m+2}} r^{mp} |v|^p   r^{2m+1} dr\right)^{r/p} \\
  & = \int_I \left (\int_0^{\infty} r^{p(m-1)-2m} |v|^p   r^{2m+1} dr\right)^{r/p}.
\end{split}
\end{equation*}
Taking $p = \frac{2m}{m-1}$, and thus $r =2m$, we observe that 
\begin{equation} 
  \| \frac{u}{r} \|_{L^r_t(I; L^p_r(rdr))} =  
  \| v \|_{L^r_t(I; L^p(\mathbb{R}^{2m+2})} ,
\label{link} 
\end{equation} 
and this choice of (r,p) is an $\dot{H}^1$-admissible pair in dimension $2m+2$. 

These observations are the connecting link between the two-dimensional problem and 
the higher-dimensional estimates. 
So for $u_n$ bounded in $X^2$, 
$v_n=\frac{u_n}{r^m}$ is bounded in $\dot{H}^1(\mathbb{R}^d)$ and we have (\ref{splitting}).
First, one has to show that 
\begin{equation}  
  \lim_{J \rightarrow \infty} \limsup_{n \rightarrow \infty} \| S(t) \tilde{w}^J_n \|_{L^{2m}
  (I;L^{\frac{2m}{m-1}}(\mathbb{R}^{2m+2})} = 0 
\label{errorv}. 
\end{equation}
For this we use interpolation and~\eqref{L00L2*} : 
\begin{equation*}
  \|S(t)  \tilde{w}^J_n \|_{L^{2m}_t L^{\frac{2m}{m-1}}} \leq 
  \| S(t)  \tilde{w}^J_n \|^{\frac{m-1}{m}}_{L^{\infty}_t L^{\frac{2m+2}{m}}_r} \cdot 
  \| S(t)   \tilde{w}^J_n \|^{\frac{1}{m}}_{L^2_t L^{\frac{2(m+1)}{m-1}}_r} .
\end{equation*}
Taking $\ds \lim_{J \rightarrow \infty} \limsup_{n \rightarrow \infty},$ and noting that the second term is uniformly bounded 
(by the standard space-time estimates for the heat equation -- see e.g., \cite{Gi}),
the claim \eqref{errorv} follows. 
Undoing the transformation $u_n = r^m v_n$ in (\ref{splitting}) yields
\begin{equation*} u_n(x) = \sum_{j=1}^J \phi^j(\frac{r}{\lambda^j_n}) + w^J_n, \hspace{0.3em} w^J_n = r^m \tilde{w}^J_n. \end{equation*}
   Now, again by interpolation \begin{equation*} \| \frac{u}{r} \|_{L^4_t L^4_r} \leq \| \frac{u}{r} \|^{\frac{m}{2(m-1)}}_{L^{2m}_t L^{\frac{2m}{m-1}}_r} \cdot \| \frac{u}{r}\|^{\frac{m-2}{2(m-1)}}_{L^2_t L^{\infty}_r}. \end{equation*}
Invoking \eqref{errorv}, we get \eqref{holygrail}, i.e., \begin{equation*} \lim_{J \rightarrow \infty} \limsup_n \| e^{t\Delta_m}  w^J_n \|_{L^4_t rL^4_r} = 0. \end{equation*}

The rest of the proof of the profile decomposition follows the same arguments as in the references cited at the beginning of this section, and is thus omitted,
with the exception of the asymptotic energy splitting~\eqref{ED}
which we now demonstrate.
 
Expanding using the definition,
\begin{equation*} \begin{split} 
E(u_n) &= \int_0^{\infty}[\sum_{j=1}^J (\frac{1}{\lambda^j_n})^2(\phi^j_r(\frac{r}{\lambda^j_n}))^2 + \sum_{j=1, i < j}^J \frac{1}{\lambda^j_n \lambda^i_n} \phi^j_r(\frac{r}{\lambda^j_n}) \phi^i_r(\frac{r}{\lambda^i_n}) + (w^J_{n,r}(r))^2 \\ 
& \quad + 2 \sum_{j=1}^J w^J_{n,r}(r) \frac{1}{\lambda^j_n} \phi^j_r(\frac{r}{\lambda^j_n}) + \frac{m^2}{r^2} \sin^2 (\sum_{j=1}^J \phi^j(\frac{r}{\lambda^j_n}) + w^J_{n,r}(r)) ] rdr,
\end{split} \end{equation*} 
We need to show that $E(u_n) - \sum_{j=1}^J E(\phi^j) - E(w^J_n) = o_n(1)$.
Expanding this out, 
\begin{equation*} \begin{split}
& E(u_n) - \sum_{j=1}^J E(\phi^j) - E(w^J_n) \\
& = \int_0^{\infty}\sum_{j=1, i < j}^J \frac{1}{\lambda^j_n \lambda^i_n} \phi^j_r(\frac{r}{\lambda^j_n}) \phi^i_r(\frac{r}{\lambda^i_n}) rdr 
+ 2 \int_0^{\infty} [\sum_{j=1}^J w^J_{n,r}(r) \frac{1}{\lambda^j_n} \phi^j_r(\frac{r}{\lambda^j_n}) ] rdr\\ &+ \int_0^{\infty} \frac{m^2}{r^2}[\sin^2(\sum_{j=1}^J \phi^j(\frac{r}{\lambda^j_n}) + w^J_{n}(r) )- \sum_{j=1}^J \sin^2(\phi^j(\frac{r}{\lambda^j_n})) - \sin^2(w^J_{n}(r))]rdr. 
\end{split} \end{equation*} 

For the first two sums it suffices to look at single pairs and show they all are $o_n(1).$ Using an approximation argument, we can assume every function involved
is in $C^{\infty}_c$ and that all the supports lie in some ball $B(0,R).$ The argument is standard so we only give a sketch: for the first sum, we just change variables, assuming without loss of generality $s^{i,j}_n := \frac{\lambda^j_n}{\lambda^i_n}$ goes to zero. Then, by H\"{o}lder's inequality, each term in the sum is bounded by $\ds \|\phi^i_r\|_{X^2} \int_0^{s^{i,j}_n R} (\phi^j_r(r))^2 rdr = o_n(1).$ 
For the second one, change variables again and employ the weak convergence of $w^J_n(\lambda^j_n r)$ to zero, for all $j \leq J,$ i.e., \eqref{wwconv hmhf}.

For the rest we will use the trigonometric identity 
\begin{equation*}\ds \sin^2(a+b) - \sin^2(a) - \sin^2(b) = \frac{1}{2} \sin(2a)\sin(2b) - 2 \sin^2(a)\sin^2(b)
\end{equation*}
and the inequality derived from it,
\begin{equation} \label{trig ineq} 
  |\sin^2(a+b) - \sin^2(a) - \sin^2(b)| \leq C |a||b|
\end{equation} for some $C>0$.
We want to show that 
\begin{equation*} 
\left|  \int_0^{\infty} \frac{m^2}{r^2}[\sin^2(\sum_{j=1}^J \phi^j(\frac{r}{\lambda^j_n}) + w^J_{n}(r) )
- \sum_{j=1}^J \sin^2(\phi^j(\frac{r}{\lambda^j_n})) - \sin^2(w^J_{n}(r))]rdr   \right| =o_n(1). \end{equation*}
Using (\ref{trig ineq})  $J-1$ times, 
this can be reduced to showing the following two estimates:
\begin{align} 
&\int_0^{\infty}\frac{|\phi^j(\frac{r}{\lambda^j_n})| |\phi^i(\frac{r}{\lambda^i_n})|} {r^2} rdr = o_n(1), \;\; i \neq j \label{prof-prof} \\ 
&\int_0^{\infty} \frac{|w^J_n(r)| \phi^j(\frac{r}{\lambda^j_n})|} {r^2} rdr = o_n(1) \;\; 
\text{for any} \hspace{0.3em} j \leq J. 
\label{error-prof}
\end{align} 

The proof of (\ref{prof-prof}) follows the same rescaling argument as before.

As for (\ref{error-prof}), a change of variables gives 
\[
  \int_0^{\infty} |w^J_n(\lambda^j_n r')| |\phi^j(r')| \frac{r'dr'}{(r')^2}
\]
which suggests that we should use the weak convergence; however,
because of the absolute value, we cannot directly obtain the result.
Since $ \|\frac{\phi^j}{r}\|_{L^2(rdr)} < + \infty,$ for every $\epsilon > 0$ 
we can find an $R = R(\epsilon) > 1$ such that
\begin{equation*} 
  \left (\int_{r \geq R} |\frac{\phi^j(r)}{r}|^2 rdr \right)^{1/2} 
  + \left (\int_{r \leq \frac{1}{R}} |\frac{\phi^j(r)}{r}|^2 rdr \right)^{1/2} < \frac{\epsilon}{2M},\end{equation*}
where 
\begin{equation*}
  M:= \sup_n \|w^J_n\|_{X^2} < +\infty.
\end{equation*}
So if we split the integral at hand into the obvious three regions,
then the inner and outer contributions, by H\"{o}lder's inequality, are 
$< \frac{\epsilon}{2}$, while for the one in the middle we have
\begin{equation*} 
\int_{\frac{1}{R} \leq r \leq R} |w^J_n(\lambda^j_n r)| |\phi^j(r)| \frac{rdr}{r^2} <  \int_{\frac{1}{R} \leq r \leq R} |w^J_n(\lambda^j_n r)| |\phi^j(r)| dr.
\end{equation*}
But, on a fixed interval $\Omega = [a,b], \; 0 < a < b < \infty$, 
$\| u \|_{X^2([a,b])}$ and $\| u \|_{H^1([a,b])}$ are equivalent,
so by the compact embedding of $H^1([a,b])$ in $L^2([a,b])$,
$w^J_n(\lambda^j_n r) \to 0$ in $L^2([a,b])$.
%if it goes weakly to 0 in X^2, take as "test" function, the characteristic of [a,b], which is in the dual, to see that it goes weakly to 0 in $X^2([a,b])$ as well).
%As a matter of fact, more is true: the whole sequence converges strongly by Urysohn's Lemma.\\
%For the whole sequence to converge strongly, by Urysohn's, every subsequence has to have a further subsequence that converges strongly to 0.
%If that is not true, there is one subsequence of the initial one, that has no converging subsequence.\\
%However, this subsequence is bounded too, hence has a weak limit (which by the uniqueness of weak limits is zero) and by the compact embedding it
%has a strongly convergent further subsequence, thus yielding a contradiction to the starting assumption.\\ 
So by H\"older again, we conclude that for $n$ sufficiently large, the integral
in~\eqref{error-prof} is $< \e$, and the result follows.
$\Box$

\subsection{Minimal blow-up solution}

For $u_0 \in E_0$, define 
\begin{equation*} 
  E_c = \inf \{ E(u_0) \; | \;  u \hspace{0.3em} \text{solves} \hspace{0.3em}  (\ref{corotationalCP}) \hspace{0.3em} \text{with} \hspace{0.3em}  u(0)= u_0, \|u\|_{L^4 rL^4([0,T_{\text{max}}))} = + \infty \} ,
\end{equation*}
the infimum of the energies of initial data leading to 
solutions which fail either to be global or to decay to zero, 
in the sense of the local well-posedness theory. 
Note that $T_{\text{max}}$ can be finite (blow-up), or infinite (corresponding to a global but not decaying solution). 

Observe that Theorem \ref{hmhf below threshold theorem} is equivalent to 
$E_c \geq 2 E(Q)$. 
Note also that $E_c > 0$, since for $u_0 \in E_0$, 
$E(u_0)$ small $\; \implies \; \| u_0 \|_{X^2}$ small, and by the local theory, such solutions
are global and decay.

We will follow the contradiction approach of Kenig-Merle: under the assumption
\[
  0 < E_c < 2 E(Q)
\] 
we will first show existence of a {\it critical element} -- a datum with energy $E_c$ giving rise to a solution that that either fails to exist globally or decay to zero. 
Then, as an immediate consequence of energy dissipation, we show that such a critical element cannot exist, reaching a contradiction. 

\begin{proposition} \label{hmhf minimal}
Assume $E_c < 2 E(Q)$. 
There exists $u_{0,c} \in X^2$ with $E(u_{0,c}) = E_c$ such that
if $u_c(t,r)$ is the solution of (\ref{corotationalCP}) with initial data $u_{0,c}$ and maximal interval of existence
$I = [0, T_{\text{max}}(u_{0,c}))$, then $ \|u_c\|_{L^4 rL^4(I)} = + \infty.$
\end{proposition}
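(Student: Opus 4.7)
The plan is to execute the standard Kenig-Merle extraction procedure, building $u_{0,c}$ as the single profile that survives in a profile decomposition of a minimizing sequence. By definition of $E_c$, pick a sequence $u_{0,n} \in E_0$ with $E(u_{0,n}) \to E_c$ whose solutions $u_n$ satisfy $\|u_n\|_{L^4 rL^4(I_n)} = +\infty$ on their maximal intervals $I_n$. Passing to a subsequence, apply Proposition~\ref{hmhfPD} to write
\[
u_{0,n}(r) = \sum_{j=1}^{J} \phi^j(r/\lambda_n^j) + w_n^J,
\]
with asymptotically orthogonal scales, weakly vanishing errors, and the energy decoupling~\eqref{ED}. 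Since $E(u_{0,n}) \to E_c < 2 E(Q)$, the decoupling gives $\sum_j E(\phi^j) + \limsup_n E(w_n^J) \leq E_c$; in particular every $E(\phi^j) \leq E_c < 2 E(Q)$, and since membership in $X^2$ automatically forces the $E_0$ boundary conditions (limits $0$ at $r = 0$ and $r = \infty$), each $\phi^j \in E_0$.

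Associate to each profile the nonlinear solution $U^j$ of~\eqref{corotationalCP} with data $\phi^j$ given by Theorem~\ref{hmhfLWP}. The main step is to show that exactly one profile is nontrivial and carries energy $E_c$. Suppose for contradiction that either $\phi^j \not\equiv 0$ for two indices, or that $E(\phi^1) < E_c$; then every $E(\phi^j) < E_c$, so by the defining property of $E_c$ every $U^j$ is global and satisfies $\|U^j\|_{L^4 rL^4} < \infty$. Build the approximate solution
\[
\tilde u_n(t,r) := \sum_{j=1}^{J} U^j\bigl(t/(\lambda_n^j)^2,\, r/\lambda_n^j\bigr),
\]
so that $\tilde u_n(0,\cdot) = u_{0,n} - w_n^J$. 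Since each rescaled profile solves~\eqref{corotationalCP} individually, the error $e_n := \partial_t \tilde u_n - \Delta_m \tilde u_n - F(\tilde u_n)$ consists solely of the nonlinear cross-terms $F\bigl(\sum_j U^j_n\bigr) - \sum_j F(U^j_n)$. Combining the pointwise bound~\eqref{Fest}, the scale orthogonality~\eqref{scale orth2}, and uniform control of $\|U^j\|_S$, these cross-terms tend to zero in $L^{4/3}_t rL^{4/3}$ as $n \to \infty$. The initial-data discrepancy $u_{0,n} - \tilde u_n(0) = w_n^J$ is bounded in $X^2$, and its linear evolution vanishes in $L^4_t rL^4_r$ upon sending first $J \to \infty$ then $n \to \infty$, by~\eqref{holygrail}. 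Hence Theorem~\ref{LT perturbations} applies and yields a uniform bound on $\|u_n\|_{L^4 rL^4}$ for $n$ large, contradicting the choice of the sequence.

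It follows that the decomposition has a single nontrivial profile $\phi^1$, with $E(\phi^1) = E_c$ and $\|w_n^1\|_{X^2} \to 0$. Using the scale invariance of~\eqref{corotationalCP} to normalize $\lambda_n^1 \equiv 1$, set $u_{0,c} := \phi^1 \in E_0$ and let $u_c$ be its solution. If $\|u_c\|_{L^4 rL^4(I)}$ were finite, then a repeat of the stability argument with $\tilde u_n := u_c$ and initial-data error $w_n^1 \to 0$ in $X^2$ would again force $\|u_n\|_{L^4 rL^4} < \infty$ for large $n$, a contradiction; hence $\|u_c\|_{L^4 rL^4(I)} = +\infty$. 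The main technical obstacle is the vanishing of the nonlinear cross-interaction $e_n$: this requires controlling trilinear expressions of the form $\int r^{-2} |U^i_n| |U^j_n| |U^k_n| \, r \, dr \, dt$ with at least two distinct indices corresponding to different scales, via a standard separation-of-scales argument using density of compactly supported profiles and the fact that $\lambda_n^j/\lambda_n^i \to 0$ or $\infty$ drives the supports of rescaled profiles apart in the limit.
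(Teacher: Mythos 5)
Your proof follows the same Kenig--Merle extraction strategy as the paper's: minimizing sequence, profile decomposition, nonlinear profiles via Theorem~\ref{hmhfLWP}, energy decoupling and the case distinction $\sup_j E(\phi^j) < E_c$ versus $= E_c$, the stability Theorem~\ref{LT perturbations} to rule out the first case, and the single-profile endgame using $\|w^1_n\|_{X^2}\to 0$ together with a second stability application. The one technical variant is your choice of approximate solution: you take $\tilde u_n = \sum_j U^j_n$ alone, whereas the paper uses $u^J_n = \sum_j v^j_n + e^{t\Delta_m} w^J_n$. In your version the remainder $w^J_n$ enters as a bounded-$X^2$ initial-data discrepancy whose linear $L^4_t rL^4$ evolution vanishes by~\eqref{holygrail}, and the error $e_n$ then consists only of profile--profile cross-terms; in the paper's version the initial discrepancy is exactly zero but the error carries additional cross-terms involving $W^J_n = e^{t\Delta_m}w^J_n$, which it estimates separately (the $A$, $B$ split). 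This is a symmetric tradeoff, both compatible with Theorem~\ref{LT perturbations}, and the remaining estimates (scale-separation for the trilinear cross-terms, boundedness of $\|\tilde u_n\|_{L^4 rL^4}$ by the $J_0$-splitting and small-data theory) are the same in substance.
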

For the proof of this proposition we follow the same strategy as in \cite{KMa,KV}. 
\begin{proof}
Let $ \{ u_{0,n} \}_n \subset X^2$ such that 
$E(u_{0,n}) \searrow E_c, n \rightarrow \infty$, and
the corresponding solutions $u_n$ of (\ref{corotationalCP}) with maximal intervals of existence $I_n = [0,T_{\text{max}}(u_{0,n}))$ satisfy
$\| u_n \|_{L^4 rL^4(I_n)} = + \infty.$ By the comparability of the energy and the $X^2-$norm, the sequence $ \{ u_{0,n} \}_n$ is bounded in $X^2$. 
Thus, passing to a subsequence, if necessary, we have the profile decomposition
\begin{equation*} 
  u_{0,n} (r) = \sum_{j=1}^J \phi^j( \frac{r}{\lambda^j_n}) + w^J_n(r)
\end{equation*}
with the stated properties in Proposition \ref{hmhfPD}.

Define the {\it nonlinear profile} $ v^j: I^j \times [0,\infty) \rightarrow \mathbb{R}$ associated to 
$\phi^j$ to be the maximal-lifespan solution to \eqref{corotationalCP} with initial data $\phi^j$,
and for each $j,n \geq 1$, define $v^j_n: I^j_n \times [0,\infty) \rightarrow \mathbb{R}$ by 
\[
  v^j_n(t,r) = v^j(\frac{t}{(\lambda^j_n)^2},\frac{r}{\lambda^j_n}), \qquad
  I^j_n := \{t \in \mathbb{R}^{+} : \frac{t}{(\lambda^j_n)^2} \in I^j \},
\]  
the solution to \eqref{corotationalCP} with initial data 
$v^j_n(0) = \phi^j(\frac{r}{\lambda^j_n})$. 
The energy decoupling reads
\begin{equation*} 
  E(u_{0,n}) = \sum^J_{j=1} E(\phi^j) + E(w^J_n) + o_n(1) \quad \forall J.
\end{equation*}
Taking $\overline{\lim_n}$ , we get
\begin{equation*} 
  E_c = \sum^J_{j=1} E(\phi^j) + \overline{\lim_n} E(w^J_n)
\end{equation*}
which by the positivity of every term implies $\ds \sum^J_{j=1} E(\phi^j) \leq E_c,$ for any $J,$
and so
\begin{equation*}
   \sup_j E(\phi^j) \leq E_c.
\end{equation*}
The goal is to show that $\phi^j =0, j \geq 2$ and $E(\phi^1) = E_c$. 

We consider the following possibilities:\\

\textit{Case 1}: 
$\sup_j E(\phi^j) <  E_c$. 
Then by the definition of $E_c$, each $v^j$ (hence also $v^j_n$) is global
($I^j = [0,\infty)$) and decaying:
$\| v^j \|_{L^4([0,\infty); r L^4)} < \infty$. 
Define an approximate solution (to $u_n(t)$) of the nonlinear equation by
\begin{equation*} 
  u^J_n(t) = \sum^J_{j=1} v^j_n(t) + e^{t \Delta_m} w^J_n.
\end{equation*}
What we want to show is that  $u^J_n$ is a good approximate solution to $u_n$ (for $n,J$ sufficiently large) in the sense of the Stability Theorem \ref{LT perturbations}. This would imply that $u_n(t)$ is global, a contradiction.

First, to see that 
$\ds \sup_{n,J} \| u^J_n \|_{L^4 rL^4(\mathbb{R}^{+})} < +\infty:$ for any $\epsilon > 0,$ \eqref{holygrail} provides a $J$ such that 
\begin{equation*} \begin{split} \overline{\lim_n}  \|u^J_n\|_{L^4 rL^4(\mathbb{R^{+}})} &\leq  \overline{\lim_n}  \| \sum^J_{j=1} v^j_n  \|_{L^4 rL^4(\mathbb{R^{+}})} + \overline{\lim_n}  \|e^{t\Delta_m} w^J_n \|_{L^4 rL^4(\mathbb{R^{+}})} \\ &\leq \sum^J_{j=1} \|v^j \|_{L^4 rL^4(\mathbb{R^{+}})} + \epsilon. \end{split} \end{equation*} 
To conclude the claim, we will show that the latter norms are bounded uniformly in $J$. 
We can split the sum into two parts (for every fixed $J$); one over $ 1 \leq j \leq J_0,$ and the rest. Let $\epsilon_0$ be such that Theorem  \ref{hmhfLWP} guarantees that, if $\|u_0\|_{X^2} \leq \epsilon_0,$ then the corresponding solution $u$ is global
with $\| u \|_{L^4_t r L^4} \leq C \epsilon_0$.
%(scaling with the comparability constant between the energy and the $X^2-$norm), 
Pick $J_0$ such that 
\begin{equation*} \sum_{j \geq J_0} E(\phi^j) \leq \epsilon_0.
\end{equation*}
Then for $j \geq J_0$, the $\| v^j \|_{L^4_t rL^4}$ are uniformly bounded,
and the claim follows.

By construction, we have $\|u^J_n(0) - u_n(0) \|_{X^2} = 0, \;  \forall J,n$, and also, 
$\| e^{t \Delta_m} (u^J_n(0) - u_n(0)) \|_{L^4 rL^4(\mathbb{R^{+}})} = 0, \; \forall J,n$. 

The perturbed PDE for $u^J_n(t)$ is 
\begin{equation*} \partial_t  u^J_n - \Delta_m u^J_n = \sum^J_{j=1} F(v^j_n),
\end{equation*} 
hence the error is given by 
\begin{equation*} e^J_n = F(u^J_n) - \sum^J_{j=1} F(v^j_n),
\end{equation*}
where F is the nonlinear term $F(u) = \frac{m^2}{r^2} ( u - \frac{\sin2u}{2}).$
We will show that the error is small in the dual norm $\| \cdot \|_{L^{4/3}_t rL^{4/3}_r}$
 for sufficiently large $n$ and $J.$
Explicitly,
\begin{equation*} 
  e^J_n = \frac{m^2}{2 r^2} \left( 2u^J_n - \sin(2u^J_n) -  \sum^J_{j=1} (2 v^j_n - \sin(2v^j_n)
  \right).
\end{equation*} 
For simplicity, denote $W^J_n (r,t) := e^{t\Delta_m} w^J_n(r).$ 
We will make use of the following trigonometric relation:
\begin{equation*}
\begin{split} | \sin(2u) + \sin(2v) - \sin(2u+2v)| &= |2 \sin(2u) \sin^2(v) + 2 \sin(2v) \sin^2(u)| \\ 
&\lesssim |u| |v|^2 + |v| |u|^2 .
\end{split} 
\end{equation*}
Using this,
\begin{equation*}
\begin{split} 
|\sum^J_{j=1} \sin(v^j_n) +& \sin(W^J_n) - \sin(\sum^J_{j=1} v^j_n + W^J_n) \pm  \sin(\sum^J_{j=1} v^j_n)| \\&\lesssim |\sum^J_{j=1} \sin(v^j_n) - \sin(\sum^J_{j=1} v^j_n)| +  |W^J_n| |\sum^J_{j=1} v^j_n|^2 + |W^J_n|^2 |\sum^J_{j=1} v^j_n| 
\end{split} 
\end{equation*}
Define $\ds A:= |W^J_n| \hspace{0.3em} |\sum^J_{j=1} v^j_n|^2 + |W^J_n|^2 \hspace{0.3em} |\sum^J_{j=1} v^j_n|,\hspace{0.5em} B:= |\sum^J_{j=1} \sin(v^j_n) - \sin(\sum^J_{j=1} v^j_n)|$.
By H\"{o}lder's inequality:
 \begin{equation*}
 \begin{split}
 \|\frac{1}{r^2} A\|_{L^{4/3} rL^{4/3}} &\leq \|W^J_n\|_{L^{4} rL^{4}} \hspace{0.3em} 
 \|\sum^J_{j=1} v^j_n \|^2_{L^{4} rL^{4}} + \|W^J_n\|^2_{L^{4} rL^{4}} \hspace{0.3em} 
 \|\sum^J_{j=1} v^j_n \|_{L^{4} rL^{4}}\\ &\leq \|W^J_n\|_{L^{4} rL^{4}} (\sum^J_{j=1}\|v^j_n 
 \|_{L^{4} rL^{4}})^2 + \|W^J_n\|^2_{L^{4} rL^{4}} (\sum^J_{j=1}\| v^j_n \|_{L^{4} rL^{4}}).
\end{split}
\end{equation*}
But by \eqref{holygrail}, 
\begin{equation*}
  \lim_{J \rightarrow \infty} \limsup_{n \rightarrow \infty} 
  \|W^J_n\|_{L^{4} rL^{4}} = 0,
\end{equation*}
and hence, by the scaling invariance of the $L^4 rL^4-$norm, 
\begin{equation*}
\lim_{J \rightarrow \infty} \limsup_{n \rightarrow \infty} \|A\|_{L^{4/3} rL^{4/3}} = 0.
\end{equation*}
For term B, again by adding and subtracting 
$\ds \sin(\sum^{J-1}_{j=1} v^j_n)$ we get, using the trigonometric inequality: 
\begin{equation*}
  B \lesssim  |\sum^{J-1}_{j=1} \sin(v^j_n) - \sin(\sum^{J-1}_{j=1} v^j_n)| + 
  |v^J_n| |\sum^{J-1}_{j=1} v^j_n|^2 + |v^J_n|^2 |\sum^{J-1}_{j=1} v^j_n|. 
\end{equation*}
We will show how to treat the second term, and after that the procedure can be easily iterated. It consists of terms of the form $|v^J_n| |v^j_n|^2$ and $|v^J_n|^2 |v^j_n|$.
% (employing Young's inequality for the product terms $|v^j_n||v^i_n|$).
%and triangle inequality for the second
We treat terms of the first type, namely 
$ \| \frac{|v^J_n|^2}{r^2} \hspace{0.3em} \frac{|v^j_n|}{r} \|_{L^{4/3}rL^{4/3}}$ 
(and the others follow in the same way).
We may employ an approximation argument to assume
the functions are smooth and compactly supported in space-time, say on $[0,T] \times [0, R]$.
Without loss of generality, assume $ s^{J,j}_n := \frac{\lambda^J_n}{\lambda^j_n} \rightarrow 0 .$  Changing variables (in space and time), and using H\"{o}lder's inequality, 
we find that the above norm is controlled by
\begin{equation*} 
  \|v^J\|^2_{L^4 rL^4}  \hspace{0.3em} \|v^j\|_{L^4 rL^4 ([0, (s^{J,j}_n)^2 T] \times [0,(s^{J,j}_n) R])} \xrightarrow{ n \to\infty} 0.
\end{equation*}
The other terms may be treated similarly, proving 
\begin{equation*} 
  \lim_{J \rightarrow \infty} \limsup_{n \rightarrow \infty} 
  \|e^J_n\|_{L^{4/3} rL^{4/3}} = 0.
\end{equation*}

Thus, we have shown that for sufficiently large $J$ and $n$, 
$u^J_n(t)$ is a good approximate solution in the sense of 
Stability Theorem \ref{LT perturbations}, from which it follows that 
$u_n(t)$ is global, with $\| u_n \|_{L^4_t r L^4} < \infty$, a contradiction. \\

\textit{Case 2}: $\ds \sup_j E(\phi^j) = E_c$.
This immediately implies (possibly after a relabeling) that $\phi^j = 0$ for $j \geq 2$, 
and the profile decomposition simplifies to 
\begin{equation*} 
  u_{0,n}(r) = \phi^1(\frac{r}{\lambda^1_n}) + w^1_n(r).
\end{equation*}
By the energy splitting and the fact that $E(u_{0,n}) \rightarrow E_c,$
we get 
\begin{equation*} 
  \overline{\lim_n}  E(w^1_n) = 0, 
\end{equation*}
from where, by the comparability of the energy and the $X^2$-norm, 
it follows that 
\[
  \tilde{u}_{0,n}(r) := u_{0,n}(\lambda^1_n r) \rightarrow \phi^1(r)
 \]
strongly in $X^2$. We also get that $E(\phi^1) = E_c.$

Define our critical element $u_c$ to be the solution of~\eqref{corotationalCP} 
emanating from initial data $\phi_1$.
To complete the proof of the Proposition, we must conclude that 
$\| u_c \|_{L^4([0,T_{max}(\phi^1)); rL^4)}= \infty$.
To see that, assume it is false, and again employ the 
Stability Theorem \ref{LT perturbations} as above to reach a contradiction.
%once more with the same approximate
%solution as before (having a single profile now).
%To check \eqref{exp}, notice that by the space-time estimates $\|e^{t\Delta_m} w^1_n\|_{L^4 rL^4} \leq \|w^1_n \|_{X^2} \xrightarrow{n \to\infty} 0$ from \eqref{strongc} and
%the comparability of the energy and the $X^2-$norm. Everything else follows exactly the same arguments as before.
\end{proof} 

\subsection{Rigidity}
In this short section, we complete the proof of Theorem~\ref{hmhf below threshold theorem}
by showing, as an immediate consequence of energy dissipation, that:
\begin{proposition}
The critical element $u_c$ found in Proposition \ref{hmhf minimal} cannot exist.
\end{proposition}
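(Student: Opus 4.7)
The plan is to exploit the non-increasing, non-negative character of $t \mapsto E(u_c(t))$ (by the dissipation identity of Theorem \ref{hmhfLWP}(3)) and split on the limit
\[
E_\infty := \lim_{t \nearrow T_{\text{max}}(u_{0,c})} E(u_c(t)) \in [0, E_c].
\]
Each of the two cases $E_\infty < E_c$ and $E_\infty = E_c$ will independently contradict the defining property $\|u_c\|_{L^4 rL^4(I)} = +\infty$ of the critical element, requiring no compactness argument at all.

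In the case $E_\infty < E_c$, I would pick $T \in I$ with $E(u_c(T)) < E_c$; such $T$ exists since $E(u_c(t)) \searrow E_\infty < E_c$. By the definition of $E_c$ as the infimum of the energies of data producing infinite $L^4 rL^4$ solution norm, the Cauchy problem with datum $u_c(T)$ must yield a solution with finite $L^4 rL^4$ norm on its maximal lifespan. The uniqueness clause of Theorem \ref{hmhfLWP}(1) identifies this solution with the translate $u_c(T + \cdot)$, so $\|u_c\|_{L^4 rL^4([T, T_{\text{max}}(u_{0,c})))} < \infty$. Combined with $\|u_c\|_{L^4 rL^4([0,T])} < \infty$ (automatic on the compact subinterval), this would contradict $\|u_c\|_{L^4 rL^4(I)} = \infty$.

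In the case $E_\infty = E_c$, the dissipation identity collapses to
\[
\int_0^{T_{\text{max}}(u_{0,c})} \int_0^\infty (\partial_t u_c)^2 \, r \, dr \, ds = E_c - E_\infty = 0,
\]
so $\partial_t u_c \equiv 0$, $u_c(t) \equiv u_{0,c}$, and $u_{0,c}$ satisfies the stationary equation $\Delta_m u_{0,c} + F(u_{0,c}) = 0$; that is, $u_{0,c}$ is an $m$-corotational harmonic map lying in $E_0$. By the remark following the first lemma of this section, the only such profile is $u \equiv 0$ (no rescaled $Q(\cdot/s)$ satisfies the boundary values $u(0) = u(\infty) = 0$), forcing $u_{0,c} \equiv 0$ and hence $E_c = 0$, contradicting the already-established positivity of $E_c$.

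The only delicate point, rather than a genuine obstacle, is the uniqueness appeal in the first case: one must know that the solution launched from the data $u_c(T)$ coincides with the translate $u_c(T + \cdot)$, which is exactly what Theorem \ref{hmhfLWP}(1) provides. It is worth emphasizing that this rigidity argument bypasses the usual Kenig-Merle programme of first establishing compactness of the trajectory modulo scaling and then invoking a Liouville-type identity; for this dissipative flow, the conclusion is essentially an immediate consequence of the monotonicity and strict positivity of the dissipated energy.
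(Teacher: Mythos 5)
Your proposal is correct and follows essentially the same line of reasoning as the paper: invoke the energy dissipation identity, observe that the energy must strictly decrease unless $u_c$ is stationary, note that $E_0$ contains no nontrivial static solutions, and then use the definition of $E_c$ (together with uniqueness and time-translation invariance) to contradict $\|u_c\|_{L^4 rL^4(I)} = \infty$. Your explicit split on $E_\infty = \lim_{t\to T_{\max}} E(u_c(t))$ and the detailed appeal to the uniqueness clause are just a more fully spelled-out version of the paper's two-sentence argument, not a different route.
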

\begin{proof}
By the energy dissipation relation, for $0 < t < T_{max}(\phi_1)$, 
\[
  E(u_c(t)) < E(u_c(0)) = E_c \in (0, 2 E(Q))
\]
unless $u_c$ is a stationary solution.
There are no non-zero stationary solutions in $E_1$, so this strict inequality holds.
Thus $E(u_c(t)) < E_c$ for some $t > 0$, whence it follows from the definition of $E_c$
that $u_c$ is global with $\| u_c \|_{L^4_t r L^4} < \infty$, a contradiction.
\end{proof}

%-------------------------------------------------------------------
\section{Heat Flow Above Threshold} \label{HMHF Above}

In this section we prove Theorem~\ref{above threshold theorem}
on the ``above-threshold" solutions of the corotational heat-flow.

\subsection{Corotational maps in $E_1$} \label{E_1}

Recall, we consider here solutions $u(r,t)$ of
\begin{equation} \label{pde}
  u_t = u_{rr} + \frac{1}{r} u_r + \frac{m^2}{2r^2}\sin(2u)
\end{equation}
in the class
\begin{equation*}
  E_1 := \{ u \; | \;  E(Q) \leq  E(u_0) \leq 3 E(Q), \quad 
  u(0)=\pi, \lim_{r \rightarrow \infty} u (r) = 0 \},
\end{equation*}
where the energy is given by
\[
  E(u) = \frac{1}{2} \int_0^\infty \left( u_r^2 + \frac{m^2}{r^2} \sin^2(u) \right) r dr.
\]
Recall the unique (up to scaling) static solution with these boundary conditions is
\[
  Q(r) = \pi - 2 \tan^{-1}(r^m),
\] 
and define the following quantities
\[
  h(r):= \sin(Q(r)) = \frac{2 r^m}{1 + r^{2m}},  \quad  \hat{h}(r) := \cos(Q(r)) = \frac{r^{2m} - 1}{r^{2m} + 1}.
\]
For later use, we record the easy computations 
\[
  h_r = - \frac{m}{r} h \hat{h}, \qquad \hat{h}_r = \frac{m}{r} h^2.
\]
We will denote scalings by
\[
  Q^s(r) := Q(r/s), \;\; h^s(r) = h(r/s), \;\;  etc., \quad s > 0.
\]
Recall that the energy space (for maps with trivial topology) is:
\[
  X^2 = \{ w : [0,\infty) \mapsto \R \; | \; \int_0^\infty \left( w_r^2 + \frac{w^2}{r^2} \right) r \; dr < \infty \}.
\]  

\subsection{No concentration at spatial infinity}

As discussed in the introduction, the mechanism of possible singularity formation is
well-known: energy concentration by bubbling off static solutions (harmonic maps).
By the corotational symmetry and finite energy, a concentration may a priori 
occur only at the spatial origin or infinity.
The latter cannot happen in finite time:
 \begin{lemma} \label{no concentration} 
 Let $u$ be a finite energy smooth solution on (\ref{pde}) on $(0,T).$ No energy concentration at spatial infinity is possible:
\begin{equation*} \lim_{R \rightarrow \infty} \limsup_{t \rightarrow T} E(u(t) ; B^c_R) = 0. \end{equation*}
\end{lemma}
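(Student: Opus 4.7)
The plan is a standard localized-energy argument, relying only on the energy dissipation identity together with finiteness of $T$. I would introduce a smooth radial cutoff $\chi_R(r) = \chi(r/R)$ where $\chi : [0,\infty) \to [0,1]$ is nondecreasing with $\chi \equiv 0$ on $[0,1]$ and $\chi \equiv 1$ on $[2,\infty)$, so that $|\chi_R'| \lesssim 1/R$ with support in $[R,2R]$. Define the truncated energy
\[
  \mathcal E_R(u(t)) := \frac{1}{2} \int_0^\infty \chi_R(r) \left( u_r^2 + \frac{m^2 \sin^2 u}{r^2} \right) r \, dr.
\]
Since $E(u(t); B_{2R}^c) \leq \mathcal E_R(u(t))$, it suffices to show that $\mathcal E_R(u(t)) \to 0$ as $R \to \infty$, uniformly for $t$ in a half-open interval approaching $T$.

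Next, I would differentiate $\mathcal E_R(u(t))$ in time, substitute the PDE for $u_t$, and integrate by parts in $r$; no boundary terms arise since $\chi_R$ vanishes near $r=0$ and decays at infinity. This yields the localized dissipation identity
\[
  \frac{d}{dt} \mathcal E_R(u(t)) = - \int_0^\infty \chi_R \, u_t^2 \, r \, dr - \int_0^\infty \chi_R'(r) \, u_r u_t \, r \, dr.
\]
Integrating on $[0,t]$ and discarding the first (nonpositive) term gives
\[
  \mathcal E_R(u(t)) \leq \mathcal E_R(u(0)) + \left| \int_0^t \!\!\int_0^\infty \chi_R'(r) \, u_r u_t \, r \, dr \, ds \right|.
\]

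The boundary error is handled by Cauchy-Schwarz, combining the \emph{global-in-time} dissipation bound $\int_0^T \!\! \int u_t^2 \, r \, dr \, ds \leq E(u_0)$ with the uniform bound $\int u_r^2 \, r \, dr \leq 2 E(u_0)$:
\[
  \left| \int_0^t \!\!\int \chi_R' \, u_r u_t \, r \, dr \, ds \right| \lesssim \frac{1}{R} \, \sqrt{2 E(u_0) \, T} \cdot E(u_0)^{1/2} = O(R^{-1}),
\]
uniformly in $t \in [0,T)$; the finiteness of $T$ is crucial here. Finally, $\mathcal E_R(u(0)) \to 0$ as $R \to \infty$ by dominated convergence, since $u(0)$ has finite energy. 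Combining these bounds gives the conclusion.

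I do not anticipate any real obstacle: the argument is essentially mechanical. The only subtlety worth emphasizing is that the Cauchy-Schwarz step introduces a factor of $\sqrt{T}$, which is harmless here but is exactly the reason the same argument cannot exclude concentration at spatial infinity in \emph{infinite} time — that scenario must be ruled out separately, by appealing to the minimum-energy constraint in $E_1$ together with the asymptotic stability result of \cite{GNT}, as outlined in the introduction.
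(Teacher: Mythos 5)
Your proof is correct and follows essentially the same approach as the paper: a cutoff-localized energy, the localized dissipation identity, and Cauchy-Schwarz against the global $L^2_{t,x}$ bound on $u_t$ with the factor $\sqrt{T}$ coming from finiteness of the blow-up time. The only difference is presentational — you argue directly (sending $R\to\infty$ uniformly in $t$), while the paper phrases it as a contradiction by picking $t_0 \nearrow T$ so that $(t_1-t_0)^{1/2}\to 0$ — and your closing remark correctly identifies why this mechanism fails at $T=\infty$.
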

\begin{proof}
The energy dissipation relation 
\begin{equation} 
  E(u(t_2)) + \int_{t_1}^{t_2} \|u_t\|^2_{L^2} ds  = E(u(t_1))
\label{E-dissipH} 
\end{equation} for $0 \leq t_1 < t_2 < T$ will be used.
First choose a smooth, radial cut-off function $\psi$ such that
\begin{equation*}
\psi(r) = \left\{
\begin{array}{rl}
0 & \text{if } r \leq 1\\
1 & \text{if } r \geq 2
\end{array} \right.
\end{equation*}
and define $\ds \psi_R(r) := \psi(\frac{r}{R})$.

If there was energy concentration at spatial infinity at time $t=T,$
for some $\delta > 0,$ we would have
\begin{equation*} 
  \limsup_{t \nearrow T} E(u(t); B^c_R) \geq \delta > 0,  \;\;  \forall R>0, 
\end{equation*} 
and we could find sequences of radii $R_n \nearrow \infty $ and times $t_n \nearrow T$ such that $\ds \lim_n E(u(t_n), B^c_{R_n}) \geq \delta > 0.$ 
Define the ``exterior'' energy
\begin{equation*} 
  \hat{E}_R(t) := \frac{1}{2} \int_0^{\infty} \psi_R(r) \left(u^2_r + \frac{m^2}{r^2} \sin^2(u) \right) rdr. \end{equation*}
By the finiteness of the energy, for any $t_0 < T$ there is an $R_0 > 1,$ such that $\hat{E}_{R_0}(t_0) \leq \frac{\delta}{4}.$ By assumption, there is  $T > t_1 > t_0$
such that  $\hat{E}_{R_0}(t_1) \geq \frac{\delta}{2}.$

By direct calculation
\[
  \frac{d}{dt}\hat{E}_{R_0}(t) = - \int_0^{\infty}
\psi_{R_0} u^2_t rdr - \int_0^{\infty} u_r u_t \frac{d \psi_{R_0}}{dr} r dr.
\]
Using $\ds \frac{d}{dr} \psi_{R}(r) = \frac{1}{R} \psi'(\frac{r}{R})$
and (~\ref{E-dissipH}):
\begin{equation*} 
\begin{split} 
\frac{\delta}{4} &\leq \int_{t_0}^{t_1} \frac{d}{dt} E(u(t); B^c_{R_0}) dt= -\int_{t_0}^{t_1}  \int_0^{\infty}
\psi_{R_0} u^2_t rdr - \int_{t_0}^{t_1}  \int_0^{\infty} u_r u_t \psi'_{R_0} r dr.  
\\ & \leq \left(\int_{t_0}^{t_1}  \int_0^{\infty} u^2_t rdr \right )^{1/2} \cdot \left(\int_{t_0}^{t_1}  \int_0^{\infty} u^2_r \left(\psi'_{R_0}\right)^2 \right )^{1/2} \\ 
&\lesssim \frac{1}{R_0} (t_1 - t_0)^{1/2} E(u_0),
\end{split} 
\end{equation*}
which yields a contradiction taking $t_0 \nearrow T$.
\end{proof}

%Given this, and following \cite{St} (or \cite{Ber} directly in the corotational setting),
%we have
%\begin{lemma}If the solution blows-up at time $T < \infty$, then
%\begin{equation*} u(t_j,  s(t_j) r) \rightarrow Q(r), \hspace{0.5em} s(t_j) \rightarrow 0\end{equation*}
%in $X^2_{\text{loc}}$ along a sequence $\{t_j\}_n \nearrow T.$
%\end{lemma}
%
%However working locally removes any knowledge of the topology of the map, which is determined by the behavior of the map at spatial infinity. We will improve the above result in the corotational setting by working globally in space in the energy
%topology. Here we are forced to account for the topological restrictions of non-trivial degree maps, and in fact we shall use these restrictions, along with our degree zero
%theory (see the previous section), to our advantage.\\

\subsection{Bubbling description}
 
Having ruled out energy concentration at infinity,
and since the energy bound and boundary conditions in $E_1$ prohibit the formation of more than one bubble,
the following proposition giving the strong convergence of the solution at a 
blow-up time, after removal of the bubble,
is a direct adaptation of Theorem 1.1 in \cite{Q}:
\begin{proposition} 
Let $u_0 \in E_1$ and $u(t)$ the corresponding solution to (\ref{pde}) blowing up at time $t=T>0.$
Then there exists a sequence of times $t_j \nearrow T,$ a sequence of scales $s_j = o(\sqrt{T-t_j}),$ a map $w_0 \in E_0,$ and a decomposition
\begin{equation} u(t_j,r) = Q(\frac{r}{s_j}) + w_0(r) + \xi(t_j,r) \label{b_decomposition} \end{equation}
such that $\ds \xi(t_j,0)=\lim_{r \rightarrow \infty} \xi(t_j,r) =0 $ and $\ds \xi(t_j) \rightarrow 0$ in $X^2$ as $j \rightarrow \infty.$
\end{proposition}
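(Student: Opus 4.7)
The plan is to follow Qing's \cite{Q} bubble-decomposition scheme adapted to the corotational setting on $\mathbb{R}^2$, using Struwe's classical bubbling theorem to produce a bubble at the blow-up time, then exploiting the energy cap $E(u_0)\le 3E(Q)$ together with the preserved degree to force exactly one bubble, and finally upgrading weak to strong convergence along a subsequence via a no-neck argument.

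First I would apply Struwe's bubbling theorem: since $u$ blows up at $t=T$, there exist $t_j\nearrow T$, scales $\lambda_j\searrow 0$ and a nontrivial finite-energy harmonic map $\tilde Q$ such that $u(t_j,\lambda_j\,\cdot)\to \tilde Q$ locally in the energy norm, with the rescaled sequence carrying at least $E(Q)$ worth of energy in any neighbourhood of $r=0$. Corotational symmetry together with Lemma \ref{no concentration} force the concentration locus to be $r=0$, and the classification \eqref{Q} identifies $\tilde Q$ with $\pm Q$ (up to a $\pi$-shift, which we absorb); setting $s_j:=\lambda_j$ we get $u(t_j, s_j\,\cdot)\to Q$ locally in $X^2$. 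The parabolic scale separation $s_j=o(\sqrt{T-t_j})$ is exactly Struwe's ``type II'' statement, derived from the local $\varepsilon$-regularity theory on parabolic cylinders of radius $\sqrt{T-t_j}$.

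Next I would use energy and degree constraints to show only one bubble forms. The degree of $u(t)$ is preserved and equal to $m$; each corotational bubble contributes $\pm m$ to the degree and carries energy $E(Q)=2m$, and the body map's energy is bounded below by its topological lower bound \eqref{TLB}. If $K\ge 2$ bubbles formed, a simple case check on the signs shows the total budget would exceed $3E(Q)$ except in degenerate stationary configurations incompatible with blow-up; hence $K=1$, and the body map has degree $0$, placing it in $E_0$. Passing to a further subsequence, $u(t_j,\cdot)-Q(\cdot/s_j)\rightharpoonup w_0$ weakly in $X^2$, with the boundary conditions $w_0(0)=\lim_{r\to\infty}w_0(r)=0$ inherited from the degree-$0$ class and the fact that all the bubble content is concentrated at scale $s_j$ around the origin, so $w_0\in E_0$.

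The final and most delicate step is to upgrade to strong convergence, i.e.\ to show $\xi(t_j,\cdot)=u(t_j,\cdot)-Q(\cdot/s_j)-w_0 \to 0$ in $X^2$. This is the heart of Qing's theorem: one shows that no energy leaks into the neck region between the bubble scale $s_j$ and a fixed outer scale $R$. The argument combines the energy dissipation identity from Theorem \ref{hmhfLWP}(3) on short parabolic windows straddling $t_j$, the $\varepsilon$-regularity theory for the harmonic map heat flow, and the parabolic scale separation $s_j=o(\sqrt{T-t_j})$ obtained above. Once no-neck is established, asymptotic energy decoupling
\[
E(u(t_j,\cdot))=E(Q)+E(w_0)+o_j(1)
\]
together with weak $X^2$-convergence and the equivalence of $E$ and $\|\cdot\|_{X^2}^2$ on degree-zero maps of energy below $2E(Q)$ (established in Section \ref{HMHF Below}) yields norm convergence, hence strong convergence. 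The main obstacle is this no-neck step: on a closed manifold (the setting of \cite{St,Q}) it is classical, but on $\mathbb{R}^2$ one must simultaneously control a neck at the small scale and prevent escape to spatial infinity -- the latter being exactly what Lemma \ref{no concentration} supplies. The surrounding topological/energetic bookkeeping is routine.
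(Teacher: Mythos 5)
Your proposal is correct and follows essentially the same route as the paper: the paper itself gives no detailed proof but simply states the proposition is a direct adaptation of Theorem 1.1 of Qing \cite{Q}, after ruling out energy escape to spatial infinity via Lemma~\ref{no concentration} and noting the energy cap $E(u_0)\le 3E(Q)$ together with the boundary conditions in $E_1$ restrict to a single bubble. Your outline (Struwe bubbling at $r=0$, identification of the bubble with $Q^{s_j}$, parabolic scale separation, degree/energy bookkeeping for one bubble, and Qing's no-neck argument to upgrade weak to strong $X^2$ convergence) is precisely the adaptation the paper is implicitly invoking.
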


So to exclude finite-time singularity formation, it suffices to show:
\begin{proposition} \label{main in Chapter 3}
Assume $m \geq 4$. 
Suppose $u(t,r)$ is a smooth solution of~\eqref{pde} on $[0,T)$ such that
along some sequence $t_j \to T-$, there are $s_j > 0$ such that
\begin{equation} \label{PropAss}
  u(t_j,\cdot) - Q^{s_j} \to w_0 \mbox{ in } X^2
\end{equation}
for some $w_0 \in X^2$ with $E(w_0) < 2E(Q)$.
Then $s_j \not \to 0$.
\end{proposition}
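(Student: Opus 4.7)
I would argue by contradiction: assume $s_j \to 0$ along a subsequence and show that the smooth solution $u$ can then be extended past time $T$, contradicting the hypothesis that $[0,T)$ is its maximal smoothness interval.

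\textbf{The body map solution.} First note that since $u(t_j,0)=\pi=Q^{s_j}(0)$ and $u(t_j,\infty)=0=Q^{s_j}(\infty)$, the $X^2$-limit in (\ref{PropAss}) forces $w_0(0)=w_0(\infty)=0$, so $w_0\in E_0$. Together with $E(w_0)<2E(Q)$, Theorem~\ref{hmhf below threshold theorem} produces a unique global smooth solution $v$ of~\eqref{pde} with $v(0,\cdot)=w_0$, satisfying $\|v\|_{L^4_t rL^4_r([0,\infty))}<\infty$ and $\|v(t)\|_{X^2}\to 0$.

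\textbf{Approximate solution and cross-error.} Fix a small $\tau>0$ (chosen below) and, for each large $j$, consider the approximate solution
\[
  \tilde u_j(t,r):=Q(r/s_j)+v(t-t_j,r), \qquad t\in[t_j,t_j+\tau].
\]
Since $Q^{s_j}$ is stationary for~\eqref{pde} and $v$ solves~\eqref{pde}, the identity $\sin(2a+2b)-\sin(2a)-\sin(2b)=-4\sin a\sin b\sin(a+b)$ gives the pointwise bound
\[
  |e_j(t,r)|=|F(Q^{s_j})+F(v)-F(Q^{s_j}+v)|
  \le \frac{2m^2}{r^2}\,h^{s_j}(r)\,|v(t-t_j,r)|,
\]
where $h^{s_j}(r)=\sin Q(r/s_j)=2(r/s_j)^m/(1+(r/s_j)^{2m})$ is concentrated at $r\sim s_j$. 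After the rescaling $\rho=r/s_j$ and H\"older, using the smoothness of the corotational profile $v$ (so $v(r)=O(r^m)$ near $r=0$) and the $\rho^{-m}$ decay of $h(\rho)$ at infinity, one expects an estimate of the form
\[
  \|e_j\|_{L^{4/3}([t_j,t_j+\tau];\,rL^{4/3})} \;\lesssim\; s_j^{\alpha}\,\|v\|_{L^4_t rL^4_r([0,\tau])}
\]
with some $\alpha=\alpha(m)>0$ whose positivity uses $m\ge 4$. Hence $\|e_j\|_{L^{4/3} rL^{4/3}}\to 0$.

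\textbf{Stability and extension past $T$.} Setting $\xi_j:=u-\tilde u_j$, which lies in $X^2$ (degrees cancel), the Duhamel equation reads
\[
  \xi_j(t)=e^{(t-t_j)\Delta_m}\xi_j(t_j)+\int_{t_j}^t e^{(t-s)\Delta_m}\bigl(F(\tilde u_j+\xi_j)-F(\tilde u_j)-e_j\bigr)\,ds,
\]
with $\xi_j(t_j)=u(t_j)-Q^{s_j}-w_0\to 0$ in $X^2$ by (\ref{PropAss}), and therefore $\|e^{(t-t_j)\Delta_m}\xi_j(t_j)\|_{L^4 rL^4}\to 0$ by~\eqref{STLpLq}. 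An adaptation of the perturbation argument of Theorem~\ref{LT perturbations} to the degree-$m$ setting (linearizing the nonlinearity about $\tilde u_j$ and exploiting that, on short intervals, the potential $F'(\tilde u_j)$ differs from the static $F'(Q^{s_j})$ by a term which is $L^4 rL^4$-small) together with the estimates of~\cite{GNT} for the linearization about $Q^{s_j}$ then yield $\|\xi_j\|_{L^4 rL^4([t_j,t_j+\tau])}\to 0$. Consequently $u$ remains smooth on $[t_j,t_j+\tau]$; taking $\tau>T-t_j$ for $j$ large extends $u$ past $T$, the desired contradiction.

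\textbf{The main obstacle.} The delicate step is the quantitative decay-in-$s_j$ estimate on the cross-error $e_j$ in $L^{4/3} rL^{4/3}$ uniformly on $[0,\tau]$; this is precisely what the assumption $m\ge 4$ unlocks, since for $m=3$ the same rescaling gives only borderline (logarithmic) gain and forces the normal-form analysis of~\cite{GNT}. A secondary technical point is that Theorem~\ref{LT perturbations} is stated for the degree-zero ($X^2$) setting, so it must be rephrased as a perturbation theorem for the equation linearized about the modulated bubble $Q^{s_j}$, carrying the associated potential in the linear part; the dispersive estimates for that linear evolution are already available in~\cite{GNT} for $m\ge 4$.
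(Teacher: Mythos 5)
Your high-level strategy — introduce the body-map solution $v$ (the paper's $w$), view $Q^{s_j}+v$ as an approximate solution, and run a perturbation argument on $\xi_j=u-(Q^{s_j}+v)$ — is close in spirit to the paper's, and the observation that $w_0\in E_0$ so Theorem~\ref{hmhf below threshold theorem} applies is correct. But there is a genuine gap in the perturbation step, and it is exactly the step that makes the argument delicate.

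You invoke ``the estimates of~\cite{GNT} for the linearization about $Q^{s_j}$.'' Those estimates hold only on the orthogonal complement of the zero mode $h^{s_j}=\sin(Q^{s_j})$ of $H^{s_j}=(L^{s_j})^*L^{s_j}$: since $L^{s_j}h^{s_j}=0$, the inversion $\|\xi\|_{X^p}\lesssim\|L^{s_j}\xi\|_{L^p}$ (which is where $m\ge 4$ enters) requires the orthogonality $(\xi,h^{s_j})_{L^2_{rdr}}=0$. Your $\xi_j$ has no reason to satisfy this. The $h^{s_j}$-component is frozen by the linear flow, and its contribution to the space-time norms is \emph{not} uniformly small in $s_j$: for instance $\|h^{s_j}\|_{rL^4}\sim s_j^{-1/2}$ and $\|h^{s_j}\|_{X^\infty}\sim s_j^{-1}$, so even an $O(1)$ coefficient along $h^{s_j}$ destroys the $L^4_trL^4_r$ and $L^2_tX^\infty$ bounds precisely in the regime $s_j\to 0$ that you are trying to rule out. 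This is exactly why the paper modulates: it writes $u=Q^{s(t)}+w+\xi$ with the scale $s(t)$ chosen (via the implicit function theorem) so that $(\xi(\cdot,t),h^{s(t)})\equiv 0$, pays for that by the extra forcing $-m(\dot s/s)h^{s}$, and then closes a bootstrap for $\|\xi\|$ and $\|\dot s\|_{L^2_t}$ together. The conclusion $s_j\not\to 0$ comes from integrating $\dot s/s\in L^1_t+L^2_t$ over $[t^*,T)$, which keeps $\log s(t)$ bounded; there is no need to actually continue $u$ past $T$, and the paper never does.

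A secondary issue: your cross-error bound $\|e_j\|_{L^{4/3}rL^{4/3}}\lesssim s_j^{\alpha}$ relies on $v(r)=O(r^m)$ near $r=0$. That is a consequence of smoothness of $v$ which is \emph{not} available uniformly down to $t=t_j$, since $v(t_j,\cdot)=w_0$ lies only in $X^2$. The paper avoids this entirely: it proves $\|Eqn(Q^s+w)\|_{L^2_tX^1}\lesssim\|w\|_{L^2_tX^\infty}+\|w\|_{L^4_tX^4}^2$ using only $\|h^s/r\|_{L^2}\lesssim 1$ and $\|(h^s)^2/r^2\|_{L^1}\lesssim 1$ (both scale-invariant), and this tends to zero as $t_j\to T^-$ simply because the space-time norms of $w$ over the shrinking interval $[t_j,T)$ vanish — no decay in $s_j$ and no pointwise regularity of $w$ at the origin is used. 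I would encourage you to replace the $s_j$-decay mechanism by this shrinking-interval mechanism, and to add the modulated scale $s(t)$ with its orthogonality condition before invoking any linearized evolution estimates.
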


The next three subsections build up to to a proof of this.

\subsection{Approximate solution}

Introduce the solution $w(t,r)$ of~\eqref{pde} with initial data at $t = t_j$ given by $w_0$:
\begin{equation} \label{w}
  \begin{array}{c}
  w_t - w_{rr}  - \frac{1}{r} w_r - \frac{m^2}{2r^2}\sin(2w) = 0 \\
  w(t_j,r) = w_0(r) \in X^2, \quad E(w_0) < 2 E(Q)
  \end{array}.
\end{equation}
By Theorem~\ref{hmhf below threshold theorem},
we know that $w$ is a global, smooth solution with
\begin{equation} \label{west}
  \| w \|_{L^\infty_t X^2 \cap L^2_t (X^\infty \cap r X^2)([t_j,\infty))} < \infty.
\end{equation}
For later use, we record one consequence of the higher regularity gained after the 
initial time:
\begin{equation} \label{higher}
  \frac{w}{r^2} \in L^2_t X^2 ( [t^*, \infty) ) \; \; \mbox{ for every } t^* > t_j.
\end{equation} 
This follows from the observations that by standard parabolic regularity estimates
(for example by performing energy-type estimates on the differentiated PDE),
the function $v(x,t) = w(r,t) e^{i m \theta}$ satisfies
$D^2 v \in L^2_t L^2_x ( [t^*,\infty) )$ , and that $w/r^2$ and $w_r/r$ are controlled pointwise by $|D^2 v|$ (for any $m \geq 2$).

For fixed $s > 0$, $Q^s$ is also a (static) solution of~\eqref{pde}.
Since the PDE is nonlinear, of course the sum $Q^s + w$ is {\it not} a solution:
\[
\begin{split}
  & \left (\p_t - \p_r^2 - \frac{1}{r} \p_r - \frac{m^2}{2r^2} \sin(2 \; \cdot \;) \right)(Q^s + w) \\
  & \quad = \frac{m^2}{2r^2} \left( \sin(2Q^s) + \sin(2w) - \sin(2Q^s + 2w) \right)  \\
  & \quad = \frac{m^2}{2 r^2} \left( \sin(2Q^s)(1 - \cos(2w)) + \sin(2w)(1 - \cos(2Q^s) \right)
  =: Eqn(Q^s + w).
\end{split}
\] 
However, 
$Q^{s(t)} + w$ is a good {\it approximate solution} over short time intervals in the sense:
\begin{lemma} 
$ \| Eqn(Q^{s(t)} + w) \|_{L^2_t X^1} \lesssim \|  w \|_{L^2_t X^\infty} + \| w \|_{L^4_t X^4}^2$
and therefore by~\eqref{west},
\begin{equation} \label{approxsol}
  \| Eqn(Q^{s(t)} + w) \|_{L^2_t X^1 [t_j,T)} \to 0 \; \mbox{ as } \;
%  \sup_{t \in [t_j,\infty)} s(t) \to 0.
  t_j \to T-.
\end{equation}
\end{lemma}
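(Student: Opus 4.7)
The plan is to reduce $Eqn(Q^{s(t)}+w)$ to elementary bilinear or trilinear expressions in $w$ with coefficients built from the profiles $h^s,\hat h^s$ and powers of $1/r$, and then to bound each piece in $X^1$ pointwise in $t$ using H\"older's inequality together with the scale-invariance of the relevant $h^s$-integrals. Using the identity for $Eqn$ already displayed in the excerpt, together with $\sin(2Q^s)=2h^s\hat h^s$, $1-\cos(2Q^s)=2(h^s)^2$, and the elementary bounds $|1-\cos 2w|\leq 2 w^2$ and $|\sin 2w|\leq 2|w|$, one obtains the pointwise majorization
\[
  \bigl|Eqn(Q^s+w)\bigr|\lesssim \frac{|h^s|}{r^2}\,w^2 + \frac{(h^s)^2}{r^2}\,|w|.
\]
Differentiating in $r$ and using $h^s_r=-\frac{m}{r}h^s\hat h^s$ together with $\partial_r(h^s\hat h^s)=\frac{m}{r}h^s(2(h^s)^2-1)$, a derivative hitting a profile factor or the $1/r^2$ weight produces one extra $1/r$, while a derivative on $w$ produces a $w_r$; thus $|\partial_r Eqn|$ is dominated by the same two templates with an extra $1/r$, plus the two $w_r$-terms $\frac{|h^s|}{r^2}|w||w_r|$ and $\frac{(h^s)^2}{r^2}|w_r|$.

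Each resulting integral in the $X^1$-norm, after absorbing factors of $r\,dr$, falls into one of two shapes. The linear-in-$w$ pieces give
\[
  \int \frac{(h^s)^2}{r}\cdot\frac{|w|}{r}\,dr \leq \|w/r\|_{L^\infty}\int \frac{(h^s)^2}{r}\,dr \lesssim \|w\|_{X^\infty},
\]
while the quadratic pieces, by H\"older with exponents $(2,4,4)$ against the measure $r\,dr$, give
\[
  \int \frac{|h^s|}{r}\cdot\frac{|w|}{r}\cdot\frac{|w|}{r}\,r\,dr \leq \|h^s/r\|_{L^2(rdr)}\,\|w/r\|_{L^4(rdr)}^2 \lesssim \|w\|_{X^4}^2.
\]
The integral $\|h^s/r\|_{L^2(rdr)}^2=\int (h^s)^2/r\,dr$ is manifestly scale-invariant via the change of variable $\rho=r/s$, so all constants are independent of the (time-dependent) scale $s=s(t)$. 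The $w_r$-terms are handled identically, trading $\|w/r\|_{L^\infty}$ for $\|w_r\|_{L^\infty}$ or one copy of $\|w/r\|_{L^4}$ for $\|w_r\|_{L^4}$, each of which is in turn controlled by $\|w\|_{X^\infty}$ or $\|w\|_{X^4}$.

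Taking $L^2_t$ on both sides of the pointwise bound $\|Eqn\|_{X^1}\lesssim \|w\|_{X^\infty}+\|w\|_{X^4}^2$ yields the claimed inequality $\|Eqn(Q^{s(t)}+w)\|_{L^2_tX^1}\lesssim \|w\|_{L^2_tX^\infty}+\|w\|_{L^4_tX^4}^2$. For the consequence~\eqref{approxsol}, apply this on the interval $[t_j,T)$: by~\eqref{west}, $\|w\|_{L^2_tX^\infty([t_j,\infty))}$ is finite, and the standard interpolation $\|w\|_{X^4}^2\lesssim \|w\|_{X^2}\|w\|_{X^\infty}$ shows $\|w\|_{X^4}^4$ is integrable on $[t_j,\infty)$; since $T<\infty$, dominated convergence on the shrinking intervals $[t_j,T)$ forces both norms on the right-hand side to vanish as $t_j\to T-$. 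The main obstacle is purely clerical: one must verify that every integral formed from $h^s$ after H\"older is genuinely scale-invariant in $s$, so that constants do not acquire a dangerous dependence on the scale as $s\to 0$.
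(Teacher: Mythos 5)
Your proof is correct and follows essentially the same route as the paper: the same pointwise majorization $|Eqn(Q^s+w)| \lesssim \frac{1}{r^2}(h^s w^2 + (h^s)^2 w)$ (and its derivative analogue), then H\"older against the scale-invariant $h^s$ integrals $\|h^s/r\|_{L^2(rdr)}$ and $\int (h^s)^2 r^{-1}\,dr$ to land in $\|w\|_{X^\infty}$ and $\|w\|_{X^4}^2$, followed by $L^2_t$. Your concluding remarks on scale invariance and the shrinking-interval limit via~\eqref{west} match the paper's (implicit) reasoning.
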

\noindent {\it Remark}: We do not need it here, but if $0 < s(t) \ll 1$, then $Q^{s(t)} + w$ is a good approximate 
solution {\it globally}, in the sense that
$\ds \|Eqn(Q^{s(t)} + w) \|_{L^2_t X^1 [t_j,\infty)} \to 0$ as $\ds
\sup_{t \in [t_j,\infty)} s(t) \to 0$.
\begin{proof}
This is an easy consequence of the elementary pointwise estimates
\[
\begin{split}
  & |Eqn(Q^{s} + w)| \lesssim \frac{1}{r^2} \left( h^s w^2 + (h^s)^2 w \right) \\
  & |\p_r Eqn(Q^{s} + w)| \lesssim \frac{1}{r^3}  \left( h^s w^2 + (h^s)^2 w \right)
  + \frac{1}{r^2} \left( h^s |w| |w_r| + (h^s)^2 |w_r| \right)  .
\end{split}
\]
Then using $\| \frac{h^s}{r} \|_{L^2} \lesssim 1$, $\| \frac{h^s}{r^2} \|_{L^1} \lesssim 1$,
and H\"older's inequality, the Lemma follows.
\end{proof}

\subsection{Linearized evolution estimates} 

\begin{lemma}
Assume $m \geq 4.$ Let $\xi(\cdot,t) \in X^2$ be a solution of the 
inhomogeneous {\it linearized equation} about $Q^s$, 
where $s = s(t) > 0$ is a differentiable function of time,
\[
  \left\{ \begin{array}{c} \p_t \xi + H^s \xi = f(r,t) \\
  \xi(r,0) = \xi_0(r) \end{array} \right\} \qquad
  H^s := -\p_r^2 - \frac{1}{r} \p_r  - \frac{m^2}{r^2} \cos(2 Q^s),
\]
which also satisfies the orthogonality condition
\begin{equation} \label{orthCh3}
  \left( \xi(\cdot,t), \; h^{s(t)} \right)_{L^2_{rdr}} \equiv 0.
\end{equation}
Then we have the estimates
\begin{equation} \label{linest}
  \| \xi \|_{L^\infty_t X^2 \cap L^2_t X^\infty} \lesssim
  \| \xi_0 \|_{X^2} + \| f \|_{L^1_t X^2 + L^2_t X^1} + \| \dot{s} \|_{L^2_t}.
\end{equation}
\end{lemma}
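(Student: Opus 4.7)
The plan is to use the \emph{supersymmetric factorization} of $H^s$. A direct computation using the identities $h^s_r = -(m/r) h^s \hat h^s$ and $\hat h^s_r = (m/r)(h^s)^2$ recorded in Subsection~\ref{E_1} shows that $H^s = (A^s)^* A^s$, where
\begin{equation*}
  A^s := \p_r + \frac{m\, \hat h^s}{r}, \qquad (A^s)^* := -\p_r - \frac{1}{r} + \frac{m\, \hat h^s}{r},
\end{equation*}
and $A^s h^s \equiv 0$. The zero mode obstructing coercivity of $H^s$ thus lives exactly in $\ker A^s = \mathrm{span}(h^s)$, and the orthogonality~\eqref{orthCh3} places $\xi$ in the orthogonal complement, on which $A^s$ is invertible. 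Introducing the transformed variable $\eta := A^s \xi$ and inverting via the integrating factor $h^s$ gives $\xi = h^s \int_0^r \eta/h^s \, dr'$; standard Hardy-type inequalities then yield $\|\xi\|_{X^2} \lesssim \|\eta\|_{L^2_{rdr}}$ and, for $m \geq 4$, a matching endpoint bound $\|\xi\|_{X^\infty} \lesssim \|\eta\|_{Y}$ for an appropriate Strichartz-type space $Y$.

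Next I would derive the equation for $\eta$. Applying $A^s$ to the linearized equation and using $A^s H^s = A^s (A^s)^* A^s = \widetilde H^s A^s$ with $\widetilde H^s := A^s (A^s)^*$, one obtains
\begin{equation*}
  \p_t \eta + \widetilde H^s \eta \;=\; A^s f + [\,\p_t,\, A^s\,]\,\xi \;=\; A^s f + \dot s\, K^s \xi,
\end{equation*}
where $K^s$ is a zeroth-order operator obtained by differentiating $A^s$ in the scale parameter, explicitly a bounded multiple of $h^s \hat h^s/(rs)$, rapidly decaying and concentrated at $r \sim s$. The conjugated operator $\widetilde H^s$ is strictly positive: non-negative by construction and, crucially, \emph{free} of the zero mode that plagued $H^s$. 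A short computation writes $\widetilde H^s$ as a positive-definite radial Schr\"odinger-type operator of the form $-\p_r^2 - r^{-1} \p_r + c_m/r^2$ (with $c_m > 0$ at both spatial endpoints) perturbed by a bounded potential localized at scale $s$. Running the same kind of argument used in Section~\ref{HMHF Below} to establish~\eqref{STLpLq} --- a dimensional lift on the conjugated problem combined with a perturbative treatment of the localized potential --- yields a Strichartz-type estimate of the form
\begin{equation*}
  \|\eta\|_{L^\infty_t L^2 \cap L^2_t Y} \lesssim \|\eta(0)\|_{L^2} + \|A^s f\|_{L^1_t L^2 + L^2_t Y^*} + \|\dot s\|_{L^2_t}\,\|\xi\|_{L^\infty_t X^2}.
\end{equation*}

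To close the argument, translate $\eta$-norms back into $\xi$-norms via the first paragraph, and note that $\|A^s f\|_{L^1_t L^2 + L^2_t Y^*} \lesssim \|f\|_{L^1_t X^2 + L^2_t X^1}$ since $A^s$ is a first-order operator with bounded coefficients. The $\dot s$-term on the right can be absorbed into the left side by partitioning $[0,T)$ into finitely many subintervals on which $\|\dot s\|_{L^2}$ is small enough to make the prefactor subunit, which is possible because $\|\dot s\|_{L^2_t}$ is an a priori finite quantity appearing on the right-hand side of~\eqref{linest}. The main obstacle is the endpoint $L^2_t X^\infty$ bound on $\xi$: obtaining a strong enough endpoint Strichartz estimate on $\widetilde H^s$ and then transferring it back through the first-order inversion of $A^s$ (which costs one power of $r$) is precisely what forces the hypothesis $m \geq 4$, paralleling the role of the $m \geq 2$ threshold in the below-threshold argument of Section~\ref{HMHF Below}. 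The cases $m \leq 3$ would demand the additional normal-form style analysis carried out in~\cite{GNT}, consciously avoided here.
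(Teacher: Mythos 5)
Your structural skeleton is the same as the paper's: the operator you call $A^s = \p_r + \frac{m}{r}\cos(Q^s)$ is precisely the paper's $L^s$ (which is also written as $h^s \p_r (h^s)^{-1}$), you pass to $\eta = L^s\xi$, you note that the reversed factorization $(L^s)(L^s)^*$ is zero-mode-free and positive, and you close by inverting $L^s$ on the orthogonal complement of $h^s$ via the GNT lemma $\|\xi\|_{X^p} \lesssim \|L^s\xi\|_{L^p}$ ($2 \leq p \leq \infty$, requiring $m \geq 4$). All of that is right and matches the paper.

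Where you diverge — and where you over-engineer — is in the middle step. The paper does not develop any Strichartz theory for the conjugated operator $\widetilde H^s = (L^s)(L^s)^*$. It simply pairs the $\eta$-equation with $\eta$ and integrates, using the pointwise lower bound $(L^s)(L^s)^* \geq -\p_r^2 - \frac{1}{r}\p_r + \frac{(m-1)^2}{r^2}$ to absorb the quadratic form into the left side; this directly yields $\|\eta\|_{L^\infty_t L^2 \cap L^2_t X^2} \lesssim$ (data), and the $L^2_t X^\infty$ control of $\xi$ then comes for free from the embedding $\|\eta\|_{L^\infty} \lesssim \|\eta\|_{X^2}$ followed by the inversion lemma. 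Your proposed "dimensional lift plus perturbative treatment of a localized bounded potential" is both unnecessary and not quite accurate as stated: the potential in $\widetilde H^s$ is $\frac{1+m^2-2m\cos(Q^s)}{r^2}$, which behaves like $\frac{(m+1)^2}{r^2}$ as $r\to 0$ and $\frac{(m-1)^2}{r^2}$ as $r\to\infty$, so the deviation from any fixed constant-coefficient operator is itself a (sign-definite) $1/r^2$ potential, not a bounded perturbation localized at scale $s$. Building Strichartz estimates for that and then transferring them through $L^{-s}$ would be considerably more work than the energy argument and would require care you have not supplied.

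Two smaller points. First, the commutator term: $\p_t L^s = -\frac{m^2 \dot s}{r s}(h^s)^2$, not a multiple of $h^s \hat h^s / (rs)$ (you have differentiated $\cos(Q^s)$ incorrectly — it produces $\sin^2$, not $\sin\cos$). Second, the interval-partitioning trick to absorb the $\dot s$-cross-term is unnecessary and a little slippery as you phrase it (you invoke the finiteness of $\|\dot s\|_{L^2_t}$, which is what the lemma is partly trying to establish). Since the cross term is bilinear in $\|\eta\|_{L^2_t X^2}$ (or $\|\xi\|_{L^\infty_t X^2}$) and $\|\dot s\|_{L^2_t}$, Young's inequality with a small parameter absorbs the $\eta$-factor into the left side at once, with no continuity or partitioning argument needed, and this is exactly what the paper does.
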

\begin{proof}
The idea comes from \cite{GNT} where it appeared as a linearization of 
a {\it generalized Hasimoto transformation}, while here we apply it directly at the linear level:
exploit the factorized form of the linearized operator
\[
  H^s = (L^s)^* L^s, \quad L^s =  \p_r + \frac{m}{r} \cos(Q^s) 
  = h^s \p_r (h^s)^{-1},
\] 
and the fact that the reverse factorization is positive,
\begin{equation} \label{posfac}
  (L^s) (L^s)^* = -\p_r^2 - \frac{1}{r} \p_r + \frac{1}{r^2} \left(
  1 + m^2 - 2 m \cos(Q^s) \right) \geq
   -\p_r^2 - \frac{1}{r} \p_r + \frac{(m-1)^2}{r^2}.
\end{equation}
Applying $L^s$ to the linearized equation produces
\[
  \p_t \eta + L^s (L^s)^* \eta = L^s f + (\p_t L^s) \eta
  \qquad \eta := L^s \xi, \;\;  
  \p_t L^s = \frac{m^2}{r} \frac{1}{s} (h^s)^2 \dot  s.
\]
Multiplying this equation by $\eta$, integrating over space and time, and 
using~\eqref{posfac}  gives
\[
  \| \eta \|_{L^\infty_t L^2}^2 + \| \eta \|_{L^2_t X^2}^2 \lesssim
  \| L^s \xi_0 \|_{L^2}^2 + \| (L^s f) \eta \|_{L^1_t L^1} + 
  \| \frac{1}{s} (h^s)^2 \frac{\eta}{r}  \|_{L^2_t L^1} \| \dot s \|_{L^2_t}.
\]
Using H\"older's inequality on the right, then Young's,
as well as \\ $\| \frac{1}{s} (h^s)^2 \|_{L^2} \lesssim 1$, yields
\[
  \| \eta \|_{L^\infty_t L^2 \cap L^2_t X^2} \lesssim
  \| \xi_0 \|_{X^2} + \| f \|_{L^1_t X^2 + L^2_t X^1} 
  + \| \dot s \|_{L^2_t}.
\]
Finally, in \cite{GNT} it was shown that we can invert $L^s$ under the orthogonality condition~\eqref{orthCh3} to bound $\xi$:
\[
  m \geq 4, \;\; \left( \xi, \; h^s \right)_{L^2} = 0 \;\; \implies \;\;
  \| \xi \|_{X^p} \lesssim \| L^s \xi \|_{L^p}, \;\; 2 \leq p \leq \infty. 
\]
Together with the standard embedding $\| \eta \|_{L^\infty} \lesssim \| \eta \|_{X^2}$ 
this completes the proof.
\end{proof}

\subsection{Modulation argument}

\begin{proof} (of Proposition \ref{main in Chapter 3})
%We argue by contradiction, so assume
%\begin{equation} \label{contradict}
%  s_j \to 0 .
%\end{equation}
Let $w(r,t)$ be as in~\eqref{w}.
For $t \in [t_j, T)$, the idea is to write the solution $u(r,t)$ in the form
\begin{equation} \label{form}
  u(r,t) = Q^{s(t)}(r) + w(r,t) + \xi(r,t),
\end{equation}
where $s(t) > 0$ is chosen so that the orthogonality condition~\eqref{orthCh3} holds.
The fact that we can make such a choice follows from a standard
implicit function theorem argument:
\begin{lemma}
There is $\e_0 > 0$ such that for any $s_0 > 0$ and any $\xi \in X^2$ with 
$\| \xi \|_{X^2} \leq \e_0$, there is $0 < s = s(\xi,s_0)$ such that 
\[
  Q^{s_0} + \xi = Q^{s} + \tilde{\xi} \;\; \mbox{ with }
  \left( \tilde{\xi}, \; h^s \right)_{L^2_{r dr}} = 0, \quad
  \left| \frac{s}{s_0} - 1 \right| + \| \tilde{\xi} \|_{X_2} \lesssim \| \xi \|_{X^2} \leq \e_0.
\]
\end{lemma}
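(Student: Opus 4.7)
The plan is a standard implicit function theorem argument in the Banach space $X^2$, after a preliminary reduction to the normalized case $s_0 = 1$ using scale invariance. Under the rescaling $\xi(r) \mapsto \xi(s_0 r)$, $s \mapsto s/s_0$, $\tilde\xi(r) \mapsto \tilde\xi(s_0 r)$, the ansatz $Q^{s_0} + \xi = Q^s + \tilde\xi$, the orthogonality $(\tilde\xi, h^s)_{L^2_{rdr}} = 0$, and the $X^2$-norm are all invariant (from $Q^{\lambda s_0}(\lambda r) = Q^{s_0}(r)$ and $h^{\lambda s}(\lambda r) = h^s(r)$). Thus it suffices to prove: for $\|\xi\|_{X^2} \leq \e_0$ there exists $s > 0$ with $|s - 1| + \|\tilde\xi\|_{X^2} \lesssim \|\xi\|_{X^2}$ and $(\tilde\xi, h^s)_{L^2_{rdr}} = 0$, where $\tilde\xi := Q^1 + \xi - Q^s$.

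Define the scalar functional $F : X^2 \times (0,\infty) \to \R$ by
\[
  F(\xi, s) := \left( Q^1 + \xi - Q^s,\; h^s \right)_{L^2_{rdr}}.
\]
Solving $F(\xi, s) = 0$ for $s$ produces the desired parameter. From $Q_r + (m/r)\sin Q = 0$ one computes $\partial_s Q^s = (m/s)\, h^s$, whence
\[
  \partial_s F(0, 1) = -\left( m\, h,\; h \right)_{L^2_{rdr}} = -m\, \|h\|_{L^2_{rdr}}^2 < 0;
\]
the norm is finite because $h(r) = 2 r^m / (1 + r^{2m})$ behaves like $r^m$ at $0$ and $r^{-m}$ at $\infty$, so $\int_0^\infty h^2 \, r\, dr < \infty$ for $m \geq 2$. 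Since $F$ is affine in $\xi$ with $D_\xi F(0,1)\eta = (\eta, h)_{L^2_{rdr}}$ bounded on $X^2$ (by Cauchy--Schwarz and $h/r \in L^2_{rdr}$), and smooth in $s$ by the explicit formulas for $Q^s, h^s$, the functional $F$ is jointly $C^1$ near $(0,1)$.

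By the implicit function theorem there exist $\e_0 > 0$ and a $C^1$ map $\xi \mapsto s(\xi)$ on $\{\|\xi\|_{X^2} < \e_0\}$ with $s(0) = 1$, $F(\xi, s(\xi)) \equiv 0$, and $|s(\xi) - 1| \leq C \|\xi\|_{X^2}$. The orthogonality holds by construction. For $\tilde\xi$,
\[
  \|\tilde\xi\|_{X^2} \leq \|\xi\|_{X^2} + \|Q^1 - Q^{s(\xi)}\|_{X^2}
  \lesssim \|\xi\|_{X^2} + |s(\xi) - 1|
  \lesssim \|\xi\|_{X^2},
\]
where the middle inequality uses $\|Q^1 - Q^\sigma\|_{X^2} \lesssim |\sigma - 1|$ for $\sigma$ near $1$, which is a one-variable mean-value estimate using the uniform bound $\|\partial_\sigma Q^\sigma\|_{X^2} = (m/\sigma)\|h^\sigma\|_{X^2} \lesssim 1$. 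Undoing the scaling reduction yields the statement in the form stated.

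The argument is essentially routine; the only care needed is verifying that $h$ and its derivative lie in the Lebesgue spaces that make $h \in X^2 \cap L^2_{rdr}$, which is immediate from the explicit formula and $m \geq 2$, together with the observation that the entire setup is scale-covariant, so the estimate automatically has the scale-invariant form $|s/s_0 - 1|$.
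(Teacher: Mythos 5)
Your proposal follows exactly the same route as the paper: reduce to $s_0=1$ by scale invariance, define $g(s;\xi)=(Q-Q^s+\xi,\,h^s)_{L^2_{rdr}}$, compute $\partial_s g(1;0)=-m\|h\|_{L^2_{rdr}}^2\neq 0$, invoke the implicit function theorem, and estimate $\tilde\xi$. The argument and the conclusion are correct.

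One small inaccuracy in the justification: when you assert that $\xi\mapsto(\xi,h)_{L^2_{rdr}}$ is bounded on $X^2$ ``by Cauchy--Schwarz and $h/r\in L^2_{rdr}$'', the relevant condition is the other weighting. An element $\eta\in X^2$ need not lie in $L^2_{rdr}$; the pairing must be factored as $(\eta,h)_{L^2_{rdr}}=(\eta/r,\,rh)_{L^2_{rdr}}$, so what is needed is $rh\in L^2_{rdr}$, which from $h(r)\sim r^{-m}$ at infinity requires $m>2$. (The paper records exactly this, and the lemma is applied only for $m\geq 4$, so no harm is done.) Your condition $m\geq 2$ is what is needed for $\|h\|_{L^2_{rdr}}<\infty$, which is a different quantity appearing in the computation of $\partial_s g$.
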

\begin{proof}
First take $s_0 = 1$.
For $s > 0$ and $\xi \in X^2$ define
\[
  g(s;\xi) := \left( Q - Q^s + \xi, \; h^s \right)_{L^2_{r dr}},
\]
a smooth function of $s$ and $\xi$ because the spatial decay of $h(r)$
implies $ \| r h(r) \|_{L^2_{rdr}} < \infty $ (provided $m > 2$).
We observe that $g(1;0) = 0$, and
\[
  \p_s g(1;0) = \left(  (-\frac{m}{s} h^s, \; h^s ) + (Q-Q^s + \xi, \; \p_s h^s ) \right)|_{s=1, \xi=0}
  = -m \| h \|_{L^2_{rdr}}^2 \not= 0,
\]
so by the Implicit Function Theorem there is $\e_0 > 0$ such that for all $\xi$ with
$\|\xi\|_{X^2} \leq \e_0$, there is $s = s(\xi)$
with $|s-1| \lesssim \| \xi \|_{X^2}$ such that $g(s;\xi) = 0$.
Then also $\tilde{\xi} := \xi + Q-Q^s$ $\implies$ 
$\| \tilde{\xi} \|_{X^2} \lesssim \| \xi \|_{X^2} + |s-1| \lesssim \| \xi \|_{X^2}$.
The case of general $s_0 > 0$ follows from simple rescaling, and the
scale invariance of the $X^2$-norm.  
\end{proof}
  This lemma shows that as long as
\begin{equation} \label{nbhd}
  \inf_{s > 0} \| u(\cdot,t) - w(\cdot,t) - Q^{s} \|_{X^2} < \e_0,
\end{equation}
we may write $u$ in the form~\eqref{form}, with orthogonality~\eqref{orthCh3} holding.

In particular, \eqref{PropAss} implies that for any $0 < \delta_0 < \e_0$, 
by taking $j$ large enough,
and therefore $\| u(\cdot, t_j) - Q^{s(t_j)} - w_0 \|_{X^2}$ small enough, we may write
\begin{equation} \label{initial}
  u(t_j,r) = Q^{s(0)} + w_0(r) + \xi(r,0), \quad \left( \xi(\cdot,0), \; h^{s(0)} \right) = 0,
  \quad \| \xi(\cdot, 0) \|_{X^2} \leq \delta_0.
\end{equation}
So by continuity, \eqref{nbhd} holds on some non-empty time interval $I = [t_j, \tau)$,
$t_j < \tau \leq T$,
on which we may write $u(r,t)$ as in~\eqref{form} with orthogonality~\eqref{orthCh3}.

Moreover by regularity of $u(r,t)$,
by shrinking $\tau$ even more if needed, we may also assume
\begin{equation} \label{xismall}
  \| \xi \|_{L^\infty_t X^2 \cap L^2_t X^\infty ([t_j, \tau))} \leq \delta_0^{\frac{2}{3}},
\end{equation}
which in particular implies~\eqref{nbhd} for $\delta_0$ sufficiently small.

We will use a standard ``continuity argument". That is, we will carry out all our 
estimates over the time interval $I = [t_j, \tau)$ under the assumption~\eqref{xismall}, 
and then conclude that
we may take $\tau = T$ provided $\delta_0$ is chosen sufficiently small.

Inserting~\eqref{form} into the PDE and using standard trigonometric identities
yields the following equation for $\xi$:
\begin{equation} \label{xieqn}
  (\p_t + H^s) \xi = -m \frac{\dot s}{s} h^s + Eqn(Q^s + w)
  + \frac{m^2}{2r^2} \left( V \sin(2 \xi) + N \right),
\end{equation}
where
\[
  V = \cos(2 Q^s)(\cos(2w)-1)) - \sin(2 Q^s) \sin(2w),
\]
and $N$ contains only terms super-linear in $\xi$ coming from the terms
\begin{equation*}
\begin{split}
  & \cos(2Q^s) [2(w+\xi) - \sin(2(w+\xi))] \hspace{0.3em} \text{and}\\
  & \sin(2Q^s) [1-\cos(2(w+\xi))].
\end{split}
\end{equation*}
 Rather than write out all
the terms of $N$ explicitly, we just record the elementary estimates
\begin{equation} \label{Nest}
\begin{split}
  & | N | \lesssim (h^s + |w|) \xi^2 + |\xi|^3 \\
  & | N_r | \lesssim (1 + |w|)(|w_r| + \frac{h^s}{r}) \xi^2
  + \frac{1}{r} |\xi|^3 + (h^s + |w|) |\xi| |\xi_r| + \xi^2 |\xi_r|.
\end{split}
\end{equation}
Our goal is to estimate all the terms on the right side of~\eqref{xieqn} 
in appropriate space-time norms, so that 
we may apply the linear estimates~\eqref{linest}.

For the first term, using $\| \frac{1}{s} h^s \|_{X^1} \lesssim 1$, we have
\begin{equation} \label{sdotest}
  \| -m \frac{\dot s}{s} h^s \|_{L^2_t X^1} \lesssim \| \dot s \|_{L^2_t}.
\end{equation}

The main estimates for $V$ are
\[
  |V| \lesssim w^2 + h^s |w| \; \implies 
  \| \frac{1}{r^2} V \|_{L^2} \lesssim \| \frac{w}{r} \|_{L^4}^2 + \| \frac{w}{r} \|_{L^\infty},
\]
using $\| \frac{h^s}{r} \|_{L^2} \lesssim 1$, and
\[
\begin{split}
  & |V_r| \lesssim |w||w_r| + \frac{1}{r} (h^s)^2 w^2 + h^s |w_r| + \frac{1}{r} h^s |w| \\
  & \implies  \| \frac{1}{r} V_r \|_{L^2}  \lesssim
  \| \frac{w}{r} \|_{L^4} \| w_r \|_{L^4} + \| \frac{w}{r} \|_{L^4}^2 + \| w_r \|_{L^\infty}
  + \| \frac{w}{r} \|_{L^\infty},
\end{split}
\]
using $\| h^s \|_{L^\infty} \lesssim 1$ and $\| \frac{h^s}{r} \|_{L^2} \lesssim 1$. 
Combining these, we obtain a spatial-norm estimate on the linear term on the right side
of~\eqref{xieqn},
\[
  \| \frac{m^2}{2r^2} V \sin(2 \xi) \|_{X^1} \lesssim
  \left( \| w \|_{X^4}^2 + \| w \|_{X^\infty} \right) \| \xi \|_{X^2},
\] 
and from there a space-time estimate:
\begin{equation} \label{Vest}
  \| \frac{m^2}{2r^2} V \sin(2 \xi) \|_{L^2_t X^1} \lesssim
  \left( \| w \|_{L^4_t X^4}^2 + \| w \|_{L^2_t X^\infty} \right) \| \xi \|_{L^\infty_t X^2},
\end{equation}
where, recall, the time interval over which these norms are taken is $I = [t_j, \tau)$.

Finally, from~\eqref{Nest}, we estimate the nonlinear terms:
\[
  \| \frac{1}{r^3} N \|_{L^1} \lesssim \left( \| \frac{h^s}{r} \|_{L^2} + \| \frac{w}{r} \|_{L^2} \right)
  \| \frac{\xi}{r} \|_{L^4}^2 + \| \frac{\xi}{r} \|_{L^3}^3 \lesssim
  \| \frac{\xi}{r} \|_{L^4}^2 + \| \frac{\xi}{r} \|_{L^3}^3,
\]
and using $\| w \|_{L^\infty} \lesssim \| w \|_{X^2} \lesssim 1$, and $\| h_s \|_{X^2} \lesssim 1$,
\[
  \| \frac{1}{r^2} N_r \|_{L^1} \lesssim
  \| \frac{\xi}{r} \|_{L^4}^2
  + \| \frac{\xi}{r} \|_{L^3}^3 +  \| \frac{\xi}{r} \|_{L^4} \| \xi_r \|_{L^4} + \| \frac{\xi}{r} \|_{L^4}^2 \| \xi_r \|_{L^2}.
\]
These last two give
\[
  \| \frac{1}{r^2} N \|_{X^1} \lesssim \| \xi \|_{X^4}^2 + \| \xi \|_{X^3}^3 + \| \xi \|_{X^4}^2 \| \xi \|_{X^2},
\]
and then the space-time estimate:
\begin{equation} \label{Nest2}
  \| \frac{m^2}{2r^2} N \|_{L^2_t X^1} \lesssim   \| \xi \|_{L^4_t X^4}^2(1 + \| \xi \|_{L^\infty_t X^2}) 
  + \| \xi \|_{L^6_t X^3}^3. 
\end{equation}

Now applying the linear estimates~\eqref{linest} to~\eqref{xieqn}, using 
\eqref{initial}, \eqref{sdotest}, 
\eqref{approxsol} (taking $j$ larger as needed),  \eqref{Vest}, and~\eqref{Nest2}, as well as~\eqref{west}, we get
\begin{equation} \label{estimate1}
\begin{split}
  \| \xi \|_{L^\infty_t X^2 \cap L^2_t X^\infty} &\leq C \left( \delta_0 +
  \| \dot s \|_{L^2_t} + \left( \| w \|_{L^4_t X^4}^2 + \| w \|_{L^2_t X^\infty} \right)
  \| \xi \|_{L^\infty_t X^2} \right. \\ & \left. 
  \qquad \quad +  \| \xi \|_{L^4_t X^4}^2(1 + \| \xi \|_{L^\infty_t X^2}) + \| \xi \|_{L^6_t X^3}^3 \right). 
\end{split}
\end{equation}
By~\eqref{west}, by choosing $j$ larger still, if needed, we can ensure
that on the interval $[t_j, T) \supset I$,
\begin{equation} \label{wsmall}
  C \left( \| w \|_{L^4_t X^4([t_j,T)}^2 + \| w \|_{L^2_t X^\infty ([t_j,T)} \right) < \frac{1}{2},
\end{equation}
so that the estimate~\eqref{estimate1} becomes
\[
   \| \xi \|_{L^\infty_t X^2 \cap L^2_t X^\infty} \lesssim \delta_0 +
  \| \dot s \|_{L^2_t} +  \| \xi \|_{L^\infty_t X^2 \cap L^2_t X^\infty}^2 +
   \| \xi \|_{L^\infty_t X^2 \cap L^2_t X^\infty}^3,
\]
and then by using~\eqref{xismall}, 
\begin{equation} \label{estimate2}
   \| \xi \|_{L^\infty_t X^2 \cap L^2_t X^\infty} \lesssim \delta_0 +
  \| \dot s \|_{L^2_t}.
\end{equation}

It remains to estimate $\dot s$. For this, we differentiate the orthogonality 
relation~\eqref{orthCh3}, rewritten as
\[
  \left( \xi,  \frac{1}{s} h^s \right)_{L^2_{r dr}}
\]
for convenience of calculation, with respect to $t$, and use the equation~\eqref{xieqn} 
for $\xi$:
\[
\begin{split}
  0 =& \left( \xi, \; -\dot s  \frac{1}{r^2} (r(r^2 h)')^s \right)\\
  +& \left(  -m \frac{\dot s}{s} h^s + Eqn(Q^s + w)
  + \frac{m^2}{2r^2} \left( V \sin(2 \xi) + N \right), \; \frac{1}{s} h^s \right)
\end{split}
\] 
where we used $H^s h^s = 0$. The first term is bounded by
\[
  |\dot s | \| \frac{\xi}{r} \|_{L^2} \| \frac{1}{r} (r(r^2 h)')^s \|_{L^2}
  \lesssim  |\dot s | \| \xi \|_{X^2},
\]
while
\[
   \left(  -m \frac{\dot s}{s} h^s, \frac{1}{s} h^s \right) = - m \dot s 
   \| \frac{1}{r} (r h)^s \|_{L^2}^2 = - m \dot s \| h \|_{L^2}^2,
\]
so
\begin{equation} \label{seqn}
  \left( m \| h \|_{L^2}^2  + O( \| \xi \|_{X^2}) \right) \dot s 
  =  \left(  Eqn(Q^s + w)
  + \frac{m^2}{2r^2} \left( V \sin(2 \xi) + N \right), \; \frac{1}{s} h^s \right).
\end{equation}
Then by~\eqref{xismall} and $\| r \frac{1}{s} h^s \|_{L^\infty} = \| r h \|_{L^\infty} \lesssim 1$,
\[
  | \dot s | \lesssim 
  \| Eqn(Q^s + w) \|_{X^1}  + \| \frac{1}{r^2} V \sin(2\xi) \|_{X^1} 
  + \| \frac{1}{r^2} N \|_{X^1},
\]
and so by~\eqref{approxsol}, \eqref{Vest}, \eqref{west}, \eqref{Nest} and~\eqref{xismall}:
\[
  \| \dot s \|_{L^2_t} \lesssim 
  \delta_0 + \left( \| w \|_{L^4_t X^4}^2 + \| w \|_{L^2_t X^\infty} \right)
  \| \xi \|_{L^\infty_t X^2} .
\]
Using~\eqref{estimate2} then shows
\[
  \| \dot s \|_{L^2_t} \leq C \left( 
  \delta_0 + \left( \| w \|_{L^4_t X^4}^2 + \| w \|_{L^2_t X^\infty} \right)
  \| \dot s \|_{L^2_t} \right).
\]
As above, by taking $j$ larger if needed we can ensure~\eqref{wsmall} and so
(using again~\eqref{xismall})
\begin{equation} \label{estimate3}
  \| \dot s \|_{L^2_t} + \| \xi \|_{L^\infty_t X^2 \cap L^2_t \cap X^\infty} \lesssim \delta_0.
\end{equation}
This now shows that in our bootstrap assumption~\eqref{xismall}, since we take
$\delta_0 \ll \delta_0^{2/3}$, we may indeed take $\tau = T$, and all of our
previous estimates hold on the full time interval $[t_j,T)$. 

It remains to show that $s(t)$ stays bounded away from zero.
Recall the pointwise bounds used above
\[
\begin{split}
  &|V| \lesssim w^2 + h^s |w|,\\
  &|N| \lesssim (h^s + |w|) \xi^2 + |\xi|^3,\\
  &Eqn(Q^s + w) \lesssim \frac{1}{r^2} \left( h^s w^2 + (h^s)^2 w \right).
\end{split}
\]
We isolate the term in the equation~\eqref{seqn} for $s$ 
coming from the part of $Eqn(Q^s + w)$ which behaves linearly in $w$, 
and write:
\[
  \frac{\dot s}{s} = v_1 + v_2 + v_3
\]
where
\[
  |v_1| \lesssim  \| \frac{w}{r^3} \|_{L^2}  \| \frac{1}{s} (r h^3)^s \|_{L^2}
  \lesssim \| \frac{|w|}{r^2} \|_{X^2} \in L^2_t,
\]
\[ 
  |v_2| \lesssim \| \frac{w^2}{r^2} \|_{L^\infty} \| \frac{1}{s^2} (h^2)^s \|_{L^1} 
  \lesssim \| w \|_{X^\infty}^2 \in L^1_t
\]  
and  
\[
\begin{split}
  |v_3| & \lesssim   
 \| \frac{w^2}{r^2}  \|_{L^2} \| \frac{\xi}{r} \|_{L^\infty}  \| \frac{1}{s} (rh)^s \|_{L^2} +
 \| \frac{w}{r} \|_{L^\infty} \| \frac{\xi}{r} \|_{L^\infty} \| \frac{1}{s^2} h^s \|_{L^1} \\ & \quad +
 \| \frac{\xi^2}{r^2} \|_{L^\infty} \| \frac{1}{s^2} (h^s)^3 \|_{L^1} +
 \| \frac{\xi^2}{r^2} \|_{L^2} (\| \frac{w}{r} \|_{L^\infty} + \| \frac{\xi}{r} \|_{L^\infty} )
 \| \frac{1}{s} (rh)^s \|_{L^2} \\
 & \lesssim \| w \|_{X^4}^2 \| \xi \|_{X^\infty} + \| w \|_{X^\infty} \| \xi \|_{X^\infty}
 + \| \xi \|_{X^\infty}^2 + \| \xi \|_{X^4}^2( \| w \|_{X^\infty} + \| \xi \|_{X^\infty} ) \\
 & \quad \in L^1_t.
\end{split}
\]
So $\frac{\dot s}{s} \in L^1_t + L^2_t$ over $[t^*,T)$, and by the Fundamental 
Theorem of Calculus, and Cauchy-Schwartz,
\[
  \sup_{t^* \leq t < T} \left| \log\left(\frac{s(t)}{s(t^*)}\right) \right| \leq \| \frac{\dot s}{s} \|_{L^1_t([t^*,T))}
  + \sqrt{t-t^*} \| \frac{\dot s}{s} \|_{L^2_t([t^*,T))} < \infty,
\]
so that $s(t)$ remains bounded away from zero, as required.
\end{proof}

\subsection{Completion of the proof}

Proposition \ref{PropAss} shows that a solution of~\eqref{pde}, with 
$m \geq 4$ and $u_0 \in E_1$ cannot form a finite-time singularity. 
Hence such a solution is global.
Moreover, it cannot form a singularity at infinite time $t=\infty$, since such this
would produce a sequence $t_j \to \infty$, with $0 < s_j \to 0 \mbox{ or } \infty$, along which 
$u(\cdot,t_j) - Q^{s_j} \to v_0$ with $X^2 \ni v_0$ a {\it static solution}, hence
$v_0 \equiv 0$. This is however prohibited by the asymptotic stability result of \cite{GNT}.  
Hence we must have $E ( u(\cdot, t) - Q^{s_\infty} ) \to 0$ for some $s_\infty > 0$.
Uniform convergence then follows from the embedding $X^2 \subset L^\infty$.
This completes the proof of Theorem~\ref{above threshold theorem}.
$\Box$

\section*{Acknowledgements}
The first author's research is supported through NSERC Discovery Grant 22124-12. 
The second author is supported by the European Research Council (grant no.~637995 ``ProbDynDispEq''). The results reported in this article were obtained as part of the second author's PhD thesis at the University of British Columbia.
%%%%%%%%%%%%%%%%%%%%%%%%%%%%%
%%%%%%%%%%%%%%%%%%%%%%%%%%%%%
%%%%%%%%%%%%%%%%%%%%%%%%%%%%%

\bigskip

\centerline{\scshape Stephen Gustafson}
\smallskip
{\footnotesize
  \centerline{Department of Mathematics, University of British Columbia}
  \centerline{Vancouver, V6T 1Z2, Canada}
  \centerline{\email{gustaf@math.ubc.ca}}
}

\bigskip

\centerline{\scshape Dimitrios Roxanas}
\smallskip
{\footnotesize
 \centerline{Department of Mathematics, The University of Edinburgh}
\centerline{James Clerk Maxwell Building, The King's Buildings, Peter Guthrie Tait Road}
\centerline{Edinburgh, EH9 3FD,United Kingdom}
\centerline{\email{dimitrios.roxanas@ed.ac.uk}}
}

\end{document}